\documentclass[review,hidelinks,onefignum,onetabnum]{siamart220329}



\usepackage{lipsum}
\usepackage{amsfonts}
\usepackage{graphicx}
\usepackage{epstopdf}
\usepackage{algorithmic}

\ifpdf
  \DeclareGraphicsExtensions{.eps,.pdf,.png,.jpg}
\else
  \DeclareGraphicsExtensions{.eps}
\fi

\DeclareMathOperator*{\argmin}{arg\,min}


\newsiamremark{remark}{Remark}
\newsiamremark{hypothesis}{Hypothesis}
\crefname{hypothesis}{Hypothesis}{Hypotheses}
\newsiamthm{claim}{Claim}

\headers{Statistical and Algebraic Properties of the 4-Laplacian }{ I. Al-Awamleh, and R. Smits}

\title{Statistical and Algebraic Properties for the 4-Laplacian via Averaging\thanks{Submitted to the editors DATE.}}

\author{Ishraq AlAwamleh\thanks{Department of Mathematical Sciences, New Mexico State University, Las Cruces, New Mexico
1290 Frenger Mall, Science Hall 236
, 88003-8001 (\email{ishraq@nmsu.edu}, \url{https://math.nmsu.edu}).}
\and Robert Smits\thanks{Department of Mathematical Sciences, New Mexico State University, Las Cruces, New Mexico
1290 Frenger Mall, Science Hall 236
, 88003-8001 (\email{rsmits@nmsu.edu}, \url{https://math.nmsu.edu}).}}

\usepackage{amsopn}

\newtheorem{example}{Example}


\ifpdf
\hypersetup{
  pdftitle={Statistical and Algebraic Properties for the 4-Laplacian},
  pdfauthor={ I. Al-Awamleh, and R. Smits}
}
\fi


\externaldocument[][nocite]{ex_supplement}
\begin{document}

\maketitle

\begin{abstract}
We find discrete analogs to continuous mean value principles that are used in the numerical analysis of the normalized p-Laplacian for particular values of p.  When $p=2$ these have analogs both to difference schemes for the Laplace on the cubic lattice and in the complex plane, showing discrete mean values hold for polynomials over any polygons with enough vertices.  A particularly interesting case is the $4-Laplacian$ which is the harmonic mean between the usual Laplacian and $\infty$-Laplacian. The averaging process for $p=4$ is an extension of typical measures of central tendency, the mean, median and midrange and has an exact formula in terms of the mean, standard deviations and skewness.  Finally, we show the existence of mean value principles for all exceptional regular polytopes.  
	
\end{abstract}

\begin{keywords}
p-Laplacian; Asymptotic Mean Value Property; Polytope; Tug-of-War; Numerical Schemes
\end{keywords}

\begin{MSCcodes}
35J60, 35Q91, 91A80
\end{MSCcodes}

\section{Introduction}

\label{sec:introd}

It is a classical theorem that a function $u$ is harmonic on a domain $\Omega$ if and only if for all arbitrarily small balls $ B_{r}\left(x_{0}\right)$ contained in a region $\Omega$ it takes on its mean values, namely

\begin{equation}\label{eq1.1}
  u(x_0)=\frac{\int_{ B_{r}\left(x_{0}\right)} u (x) d x}{\int_{ B_{r}\left(x_{0}\right)}1 d x  } =\frac{1}{\left|B_{r}(x_0)\right|} \int_{B_{r}(x_0)} u(x) d x
 \end{equation}

It is also well known that this characterization can be loosened to only requiring 

\begin{equation}\label{eq1.2}
 u(x)=\frac{1}{\left|B_{\varepsilon}(x)\right|} \int_{B_{\varepsilon}(x)} u(y) d y+o\left(\varepsilon^2\right),
 \end{equation}
as $\varepsilon \rightarrow 0 $, see the paper of Manfredi, Parviainen and Rossi \cite{ManParRoss}.  Beginning with that paper an enormous amount of results connecting asymptotic mean value properties to solutions of nonlinear elliptic and parabolic problems, especially the p-Laplacian have occurred. In two dimension a number of mean value property involving gradients  were shown in \cite{GiorgiSmits}, motivated by numerical analysis of a variant of the p-Laplacian, namely 

\begin{equation}\label{eq1.3}
 u\left(x_0\right)=\frac{\int_{B_\epsilon\left(x_0\right)}\left|\langle \nabla u\left(x_0\right) ,\left(x-x_0\right)\rangle\right|^{p-2} u(x) d x}{\int_{B_\epsilon\left(x_0\right)}\left|\langle \nabla u\left(x_0\right) ,\left(x-x_0\right)\rangle \right|^{p-2} d x}+o\left(\epsilon^2\right)
 \end{equation}
holds in a viscosity sense, if and only if, $\Delta_p u\left(x_0\right)=0$ holds in a viscosity sense where $\Delta_p u=\operatorname{div}\left(|\nabla u|^{p-2} \nabla u\right)$ is the $p$-Laplace operator.

\newpage
These results were extended to all dimensions in a recent paper of one of the coauthors \cite{Alawamleh}.  In general for smooth functions with nonvanishing gradient, there are expansion theorems of the form

\begin{equation}\label{eq1.4}
\phi\left(x_0\right)=\frac{\int_{B_e\left(x_0\right)}\left|\langle \nabla \phi\left(x_0\right) ,\left(x-x_0\right)\rangle \right|^{p-2} \phi(x) d x}{\int_{B_e\left(x_0\right)}\left|\langle \nabla \phi\left(x_0\right) ,\left(x-x_0\right)\rangle \right|^{p-2} d x}-\frac{p}{N+p} \frac{\epsilon^2}{2} \Delta_p^G \phi\left(x_0\right)+o\left(\epsilon^2\right)
\end{equation}
where $ \Delta_p^G u = \frac{1}{p}|\nabla u|^{2-p} \operatorname{div}\left(|\nabla u|^{p-2} \nabla u\right) $ is a variant of the p-Laplacian, known as the game p-Laplacian, or normalized p-Laplacian. This version of the p-Laplacian is related to a two player game called tug-of-war with noise, see \cite{Lewicka}.  One can also look at surface integrals, which result in expansion theorems of the form

\begin{equation}\label{eq1.5}
 \phi\left(x_0\right)=\frac{\int_{\partial B_\epsilon\left(x_0\right)}\left|\langle \nabla \phi\left(x_0\right) ,\left(x-x_0\right)\rangle \right|^{p-2} \phi(x) d x}{\int_{\partial B_e\left(x_0\right)}\left| \langle \nabla \phi\left(x_0\right) ,\left(x-x_0\right)\rangle \right|^{p-2} d x}-\frac{p}{N+p-2} \frac{\epsilon^2}{2} \Delta_p^G \phi\left(x_0\right)+o\left(\epsilon^2\right)
\end{equation}
\noindent These expansion theorems were shown in two dimensions in \cite{GiorgiSmits} and in all dimensions in \cite{Alawamleh}.  In a paper of one of the coauthors \cite{SmitsAveraging} a numerical scheme was developed to solve the Dirichlet problem 

\begin{equation}\label{eq1.6}
\begin{cases}-\Delta_p^G u= 0 & \text { in } \Omega \\ u=G & \text { on } \partial \Omega\end{cases}
\end{equation}

which relied on convergence, via the dynamic programming principle, of an averaging operator, locally adapted to the p-Laplacian, called the p-average. Using a different approach, \cite{Oberman} Oberman used a scheme based on a weighted interpolation between the mean and midrange to attack the same problem. All of these schemes were synthesized in a recent paper \cite{Manfredi} where a framework is developed for families of averages to capture an asymptotic mean value property for the game p-Laplacian.


\section{p-Averages and their properties}
The importance of central tendency has a long history, and includes the arithmetic mean, the mode, median, midrange and various weighted and interpolated versions of these.  To be an average or measure of central tendency certain properties are natural. In particular larger sets should have larger means, adding constants to all values in a set should shift the average and of course the central tendency is between the maximum and minimum.  To this end we follow \cite{Manfredi} where averages are defined as

\begin{definition}\label{def1:def1}
    We say that an operator $A: L^{\infty}(S) \rightarrow \mathbb{R}$ is an average if the following assumptions hold:
    
	  (1) (Stable) $\inf _{y \in S} \phi(y) \leq A[\phi](x) \leq \sup _{y \in S} \phi(y)$ for all $x \in \Omega$
	  
(2) (Monotone) If $\phi \leq \psi$ in $s$ then $A[\phi] \leq A[\psi]$ in $s$.

(3) (Affine invariance) $A[\lambda \phi+\xi]=\lambda A[\phi]+\xi$.

	\end{definition}

The main example studied in connection to the PDE literature is when $S = \Omega_E$ where $\Omega \subset \mathbb{R}^n$,  $\Omega_E$ is an extension of $\Omega$ by strip of width 1, the average is indexed by $\varepsilon >0$ and is localized over balls, namely

$$A[\phi](x) =\left(\frac{p-2}{p+n}\right)\left(\frac{\max _{B_{\varepsilon}(x)} \phi +\min _{B_{\varepsilon}(x)} \phi}{2}\right)+\left(\frac{2+n}{p+n}\right)\frac{1}{|{B_{\varepsilon}(x)}|} \int_{B_{\varepsilon}(x)} \phi(y) d y$$

In this paper we will use a different family of averages, following \cite{SmitsAveraging} called p-averages and which can be computed numerically using a Newton bracketing scheme.  We define the p-average for a set, $1 < p \leq \infty$, first considered for finite sets and their limits in \cite{SmitsAveraging} and more recently, in general by \cite{Ishiwata}.   

    \begin{definition}\label{def:def2}
 
        Given a continuous function $\phi$ on a compact topological space $X$ equipped with a positive Radon measure $\nu$ there are unique real values $A_p^X(\phi)$ that solve the variational problem
$$
\left\|\phi-A_p^X(\phi)\right\|_{L^p(X, \nu)}=\min _{\lambda \in \mathbb{R}}\|\phi-\lambda\|_{L^p(X, \nu)},
$$

    \end{definition}

$A_p^X$  has the algebraic properties making it into an average as in Definition 2.1. A similar definition holds for $p=1$, except if $\nu$ has point masses the 1-average, known as the median need not be unique.  This is the classical situation for counting measure on an even number of points, where the median fails to be unique because $L^1$ is not strictly convex. The $A_p$ average also has some analytic properties, see e.g. \cite{Ishiwata}.

\vspace{0.2in}
 
\begin{itemize}
    \item  for $1 \leq p<\infty, A_p^X(\phi)$ is characterized by the equation
$$
\int_X\left|\phi(y)-A_p^X(\phi)\right|^{p-2}\left[\phi(y)-A_p^X(\phi)\right] d \nu=0
$$
    
    \item Let $\phi(y)$ and $\psi(y)$ be two continuous functions with $\left\|\phi-\psi\right\|_{L^{\infty}(X), \nu} \leq \delta$ then
$$
A_{p}^X(\phi)-\delta \leq A_{p}^X(\psi) \leq A_{p}^X(\phi)+\delta
$$
 In fact 
 
 \begin{equation}
     \left|\left\|\phi-A_p^X(\phi)\right\|_{L^p(X)}-\left\|\psi-A_p^X(\psi)\right\|_{L^p(X)}\right| \leq\|\phi-\psi\|_{L^p(X)}
 \end{equation}
 
\item If
$ \phi \leq \psi \text { then } A_p^X(\phi) \leq A_p^X(\psi)$, if strict equality occurs on a set of positive measure and $1<p<\infty$ there is strict inequality, namely $A_p^X(\phi) < A_p^X(\psi)$.
\end{itemize}

 \vspace{0.2in}

If $\nu$ is a probability measure, there is a natural measure of dispersion similar to standard deviation $\sigma_p(\phi)= \left\|\phi-A_p^X(\phi)\right\|_{L^p(X, \nu)}$ occurs and the last equation can be written
 \begin{equation}
     \left|\sigma_p(\phi)- \sigma_p(\psi) \right| \leq\|\phi-\psi\|_{L^p(X)}
 \end{equation}
 
 \vspace{0.2in}

Averaging operators over balls in $D \subset \mathbb{R}^n$ are smoothing as we next describe. Let $\mathcal{D} \subset \mathbb{R^N}$ be open, bounded and connected. For $\epsilon \in(0,1)$ fixed, define the thickened inner boundary $\Gamma_{\epsilon} = \{x\in \mathcal{D}; dist(x,\partial \mathcal{D})\leq \epsilon\}$ of $\mathcal{D}$,  $\Gamma_{out}= \{x\in \mathbb{R}\backslash \mathcal{D}; dist(x,\partial \mathcal{D})\leq 1\}$ the outer boundary and the closed domain $\mathcal{D}^\diamond =  \mathcal{D} { \displaystyle \cup } \Gamma_{out}.$ For $0<\epsilon <1$ and $x \in D$ we define $A_\epsilon(u(x))=A_p^{B_\epsilon (x)}(u)$.
 
 \newpage

\begin{theorem}
 Let $\epsilon >0$, ${A}_\epsilon (u(x))$ is H\"{o}lder continuous for $p> 2$ when $u$ is a bounded function.  
\end{theorem}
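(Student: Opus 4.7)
The plan is to read off the Hölder modulus from the characterizing equation for the p-average and to quantify how the defining integral varies when we translate the ball.

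First, pick $x_1, x_2 \in D$, set $d = |x_1 - x_2|$ and $a_i = A_\epsilon(u(x_i)) = A_p^{B_\epsilon(x_i)}(u)$. By the first bullet after Definition 2.2, each $a_i$ is the unique root of
\[
F(\lambda, x_i) := \int_{B_\epsilon(x_i)} |u(y) - \lambda|^{p-2}\bigl(u(y) - \lambda\bigr)\, dy.
\]
Since $u$ is bounded by some $M$, stability forces $|a_i| \leq M$, so we only care about $F$ on $\lambda \in [-M,M]$, where the integrand is bounded in absolute value by $(2M)^{p-1}$.

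Next I will upper bound the spatial variation. Since $B_\epsilon(x_1) \bigtriangleup B_\epsilon(x_2)$ is contained in two crescents of width comparable to $d$, one has $|B_\epsilon(x_1) \bigtriangleup B_\epsilon(x_2)| \leq C_n\, d\, \epsilon^{n-1}$ for $d \leq \epsilon$, and therefore
\[
\sup_{|\lambda| \leq M} \bigl|F(\lambda, x_1) - F(\lambda, x_2)\bigr| \leq C_n (2M)^{p-1}\, d\, \epsilon^{n-1}.
\]
Applied at $\lambda = a_2$ with $F(a_2, x_2) = 0$, this gives $|F(a_2, x_1)| \leq C_1 d\, \epsilon^{n-1}$.

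The main step is the lower bound on $F(\cdot, x_1)$. Since $F(a_1, x_1) = 0$, Fubini gives
\[
F(a_2, x_1) - F(a_1, x_1) = -(p-1)\int_{B_\epsilon(x_1)} \int_{a_1}^{a_2} |u(y) - s|^{p-2}\, ds\, dy.
\]
The core calculation (and the main obstacle) is the pointwise estimate, valid for every $y$ and for $p > 2$,
\[
\int_{a_1}^{a_2} |u(y) - s|^{p-2}\, ds \;\geq\; \frac{|a_2 - a_1|^{p-1}}{(p-1)\,2^{p-2}},
\]
which one verifies by splitting into the cases $u(y) \in [a_1, a_2]$ (where the integral equals $\frac{1}{p-1}[(u(y)-a_1)^{p-1} + (a_2-u(y))^{p-1}]$ and is minimized at the midpoint) and $u(y) \notin [a_1, a_2]$ (where monotonicity of $v \mapsto (v+h)^{p-1} - v^{p-1}$ for $p > 2$ provides the same lower bound). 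Integrating in $y$ over $B_\epsilon(x_1)$ yields
\[
|F(a_2, x_1)| \;\geq\; \frac{|B_\epsilon|}{2^{p-2}}\,|a_1 - a_2|^{p-1} \;=\; c_{n,p}\, \epsilon^n\, |a_1 - a_2|^{p-1}.
\]

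Comparing the two bounds,
\[
c_{n,p}\, \epsilon^n\, |a_1 - a_2|^{p-1} \leq C_1\, d\, \epsilon^{n-1},
\]
so $|a_1 - a_2| \leq C\,(d/\epsilon)^{1/(p-1)}$, with $C$ depending only on $n$, $p$, and $\|u\|_\infty$. Since $p > 2$ gives $1/(p-1) < 1$, this is the claimed Hölder continuity (with exponent $1/(p-1)$) of $x \mapsto A_\epsilon(u(x))$; the restriction $d \leq \epsilon$ is harmless because for $d \geq \epsilon$ the trivial bound $|a_1 - a_2| \leq 2M$ already yields Hölder control after adjusting the constant.
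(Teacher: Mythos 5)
Your proof is correct and follows essentially the same strategy as the paper: bound the variation of the characterizing integral $\lambda\mapsto\int_{B_\epsilon(x)}|u-\lambda|^{p-2}(u-\lambda)\,dy$ from above by the measure of the symmetric difference $|B_\epsilon(x_1)\bigtriangleup B_\epsilon(x_2)|\lesssim d\,\epsilon^{n-1}$ and from below by $c\,\epsilon^{n}|a_1-a_2|^{p-1}$, then compare to get the exponent $1/(p-1)$. The only difference is cosmetic: where the paper invokes the classical monotonicity inequality $\langle|b|^{p-2}b-|a|^{p-2}a,\,b-a\rangle\geq 2^{2-p}|b-a|^{p}$, you rederive it via Fubini and a two-case pointwise estimate, and you additionally make explicit the harmless restriction to $d\leq\epsilon$.
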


\begin{proof}
 Let $x_1, x_2 \in \mathcal{D}$ and $\epsilon >0$. By affine invariance, we can suppose that ${A}_\epsilon(u(x_1))=0$ and ${A}_\epsilon (u(x_2))=\lambda >0$. Using the characterization of ${A}_\epsilon (u(x))$ we have

\vspace{0.2in}

$\left|\int_{B_\epsilon (x_1)}\left|u(x) \right|^{p-2}u(x)\,dx-\int_{B_\epsilon (x_2)}\left|u(x) \right|^{p-2}u(x)\,dx\right|$

$=\left|\int_{B_\epsilon (x_2)}\left|u(x) -\lambda \right|^{p-2} \left(u(x)-\lambda\right)\,dx-\int_{B_\epsilon (x_2)}\left|u(x) \right|^{p-2}u(x)\,dx\right|$\\

We bound the first side of the equality above by a set difference and the second side of the equality below using a classical inequality \cite{Notes}.   

\vspace{0.1in}

$\left|\int_{B_\epsilon (x_1)}\left|u(x) \right|^{p-2}u(x)\,dx-\int_{B_\epsilon (x_2)}\left|u(x) \right|^{p-2}u(x)\,dx\right|\leq ||u||_{L^{\infty}}^{p-1}|B_\epsilon (x_1)\triangle B_\epsilon (x_2)|$\\

the right side of the equality is first factored as

$\left|\int_{B_\epsilon (x_2)}\left|u(x) -\lambda \right|^{p-2} \left(u(x)-\lambda\right)\,dx-\int_{B_\epsilon (x_2)}\left|u(x) \right|^{p-2}u(x)\,dx\right|$

$=|\lambda|^{p-1}\left|\int_{B_\epsilon (x_2)}\left|\frac{u(x)}{\lambda} -1 \right|^{p-2} \left(\frac{u(x)}{\lambda}-1\right)\,dx-\int_{B_\epsilon (x_2)}\left|\frac{u(x)}{\lambda} \right|^{p-2}\frac{u(x)}{\lambda}\,dx\right|.$\\

 set $t(x)=\frac{u(x)}{\lambda}$ the last term becomes
 
$$=|\lambda|^{p-1}\left|\int_{B_\epsilon (x_2)}\left|t(x) -1 \right|^{p-2} \left(t(x)-1\right)\,dx-\int_{B_\epsilon (x_2)}\left|t(x) \right|^{p-2}t(x)\,dx\right|.$$

Now for any $a, b \in \mathbb{R}^n$ 
$\langle |b|^{p-2}b-|a|^{p-2}a,b-a\rangle \geq 2^{2-p}|b-a|^p$ if $p\geq 2$.\\ Thus $(|t(x)|^{p-2}t(x)-|t(x)-1|^{p-2})(t(x)-1) \geq \frac{1}{2^{p-2}}$ yields that:\\
$\left|\int_{B_\epsilon (x_2)}\left|u(x) -\lambda \right|^{p-2} \left(u(x)-\lambda\right)\,dx-\int_{B_\epsilon (x_2)}\left|u(x) \right|^{p-2}u(x)\,dx\right|$\\
$ \geq |\lambda|^{p-1} \frac{1}{2^{p-2}} (B_\epsilon (x_2))$

\vspace{0.5cm}
So we have\\
$$|\lambda|^{p-1} \frac{1}{2^{p-2}} |B_\epsilon (x_2)|\leq||u||_{L^{\infty}}^{p-1} |B_\epsilon (x_1)\triangle B_\epsilon (x_2)|$$ or $$\lambda \leq ||u||_{L^{\infty}}\left[ 2^{p-2}\frac{|B_\epsilon (x_1)\triangle B_\epsilon (x_2)|}{|B_\epsilon (x_2)|}\right]^{\frac{1}{p-1}}.$$

\vspace{0.2in}
Classical estimates, see \cite{Lewicka} for the set difference of balls give us  $|B_\epsilon (x_1)\triangle B_\epsilon (x_2)|=
|B_\epsilon (0)\triangle B_\epsilon (x_1-x_2)|
= \frac{2}{\epsilon} \frac{V_{N-1}}{V_{N}} |B_{\epsilon}(0)||x_1-x_2|$ where $V_n=|B_1(0)|$ and $V_{n-1}$ is the volume of the unit ball in $\mathbb{R}^{n-1}.$
\vspace{0.1in}
Thus $|\lambda| \leq C ||u||_{L_{\infty}} |x_1-x_2|^{\frac{1}{p-1}}$ where $C=2^{\frac{p-2}{p-1}} \left( \frac{2}{\epsilon} \frac{V_{N-1}}{V_{N}} \frac{|B_{\epsilon}(0)|}{|B_{\epsilon}(x_2)|} \right)^{\frac{1}{p-1}}$ and hence
$$\left|{A}_\epsilon (u(x_1))-{A}_\epsilon (u(x_2))\right| \leq C ||u||_{L^{\infty}} |x_1-x_2|^{\frac{1}{p-1}}.$$

\end{proof}

At this point we observe the special cases $p=2$ and $p=\infty$ are simple to understand amongst the p-averages and hence used most frequently in PDE approaches to stochastic games like Tug-of-War \cite{Lewicka}.  As mentioned, there are cases when $p=1$, the set $X$ is discrete with an even number of points and there is no longer uniqueness for the minimizer. On the other hand, there is uniqueness for all values $p>1$ and one can study the behavior as $p \rightarrow 1$.  We quote the following result from the first author's PhD thesis.

\begin{theorem}
Given data $\{x_1, x_2,..., x_k, x_{k+1},..., x_{2k}\}$, with $x_1< x_2<...< x_k< x_{k+1}<...< x_{2k}$ and $ c_p=\argmin_{c \in \mathbb{R}} \displaystyle\sum_{i=1}^{2k}\left|x_i-c \right|^{p}$, then $$p(c)=\displaystyle \prod_{x_k \leq c,i=1, \cdots,k}(c-x_i)-\displaystyle \prod_{x_{k=1} \geq c,i=k+1, \cdots,2k}(x_i-c)=0$$ has exactly one solution $x_k < c_{+1} < x_{k+1}$, called the $\gamma$-median  and $c_{+1}=\lim_{p\to 1+} c_p.$  
\end{theorem}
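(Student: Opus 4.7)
The plan is to prove the two claims separately: (i) the polynomial $p(c)$ has a unique zero in $(x_k, x_{k+1})$, and (ii) this zero coincides with $\lim_{p \to 1^+} c_p$.

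For (i), I would evaluate the polynomial at the endpoints: $p(x_k) = -\prod_{i=k+1}^{2k}(x_i - x_k) < 0$ and $p(x_{k+1}) = \prod_{i=1}^{k}(x_{k+1} - x_i) > 0$, so by continuity a zero exists in $(x_k, x_{k+1})$. On that interval $\prod_{i=1}^{k}(c - x_i)$ is a product of positive strictly increasing linear factors, hence strictly increasing, while $\prod_{i=k+1}^{2k}(x_i - c)$ is strictly decreasing; therefore $p$ is strictly increasing there and the zero is unique. Call it $c_{+1}$.

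For (ii), I would use a two-term asymptotic expansion of the objective $\sum_{i=1}^{2k}|x_i - c|^{p}$ near $p = 1$. Writing $p = 1 + \varepsilon$ and $|x_i - c|^{1+\varepsilon} = |x_i - c| + \varepsilon |x_i - c|\log|x_i - c| + O(\varepsilon^2)$,
$$\sum_i |x_i - c|^{1+\varepsilon} = S_1(c) + \varepsilon f(c) + O(\varepsilon^2),$$
where $S_1(c) = \sum_i |x_i - c|$ and $f(c) = \sum_i |x_i - c|\log|x_i - c|$. A short computation shows that $S_1(c)$ is constant on $[x_k, x_{k+1}]$ (the classical even-data median phenomenon) and strictly larger outside, so the leading term forces any subsequential limit of $c_p$ as $p \to 1^+$ into $[x_k, x_{k+1}]$; selection within this interval is governed by minimizing $f$. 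Differentiating,
$$f'(c) = \log \frac{\prod_{i=1}^{k}(c - x_i)}{\prod_{i=k+1}^{2k}(x_i - c)}, \qquad c \in (x_k, x_{k+1}),$$
so $f'(c) = 0$ is exactly $p(c) = 0$. Because the argument of the logarithm is strictly increasing in $c$ while tending to $0^+$ at $x_k^+$ and to $+\infty$ at $x_{k+1}^-$, $f$ is strictly convex on $(x_k, x_{k+1})$ with an interior minimum at the unique solution of $p(c) = 0$, namely $c_{+1}$. A compactness argument together with uniqueness of the minimizer then upgrades convergence along subsequences to the full limit $c_p \to c_{+1}$.

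The main obstacle I anticipate is making the expansion rigorous uniformly in $c$, since $|x_i - c|\log|x_i - c|$ is singular as $c \to x_i$. I would handle this by first using the leading-order analysis to confine $c_p$ to a compact sub-interval of $(x_{k-1}, x_{k+2})$ for small $\varepsilon$, and then ruling out that $c_p$ approaches $x_k$ or $x_{k+1}$ faster than polynomially in $\varepsilon$; otherwise a factor $(c_p - x_k)^{p-1}$ in the critical-point equation would develop a nontrivial limit strictly less than $1$, breaking the balance between the two sides. Alternatively, one may bypass the objective-function expansion and work directly with $\sum_{i=1}^{k}(c_p - x_i)^{p-1} = \sum_{i=k+1}^{2k}(x_i - c_p)^{p-1}$ via $a^{p-1} = 1 + (p-1)\log a + O((p-1)^2)$, arriving at the same conclusion after cancellation of the $2k$ leading constants.
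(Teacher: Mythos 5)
The paper itself offers no proof of this theorem --- it is explicitly quoted from the first author's thesis --- so there is no argument of record to compare yours against; I can only judge the proposal on its merits. Part (i) is correct and complete: the sign change $p(x_k)=-\prod_{i>k}(x_i-x_k)<0<\prod_{i\le k}(x_{k+1}-x_i)=p(x_{k+1})$ together with strict monotonicity of $p$ on $(x_k,x_{k+1})$ (an increasing positive product minus a decreasing positive one) gives existence and uniqueness of $c_{+1}$. Part (ii) follows the right strategy: confine $c_p$ to the median interval, expand the first-order condition $\sum_{i\le k}(c_p-x_i)^{p-1}=\sum_{i>k}(x_i-c_p)^{p-1}$ in $\varepsilon=p-1$, cancel the $2k$ leading constants, and identify the surviving $O(\varepsilon)$ terms with $\log$ of the ratio of the two products, i.e.\ with $p(c)=0$; your computation of $f'(c)$ for the second-order functional is also correct. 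The one step that needs more care than you give it is the endpoint exclusion: ruling out only a ``faster than polynomial'' approach of $c_p$ to $x_k$ is not enough, since $c_p-x_k\sim\varepsilon$ would still keep $(c_p-x_k)^{\varepsilon}\to 1$ while invalidating the uniform expansion of that single term. The clean fix is to isolate it: the critical-point identity forces $(c_p-x_k)^{\varepsilon}=1+\varepsilon L_\varepsilon$ with $L_\varepsilon$ a bounded combination of logarithms of quantities bounded away from $0$ and $\infty$, whence $\log(c_p-x_k)=L_\varepsilon+O(\varepsilon)$ stays bounded and $c_p\to x_k$ is impossible; the same applies at $x_{k+1}$. (Confinement of $c_p$ to $(x_k,x_{k+1})$ for $p$ near $1$ is also most easily obtained from the derivative $g_p$ of the objective, since $g_p(x_k)\to -1$ and $g_p(x_{k+1})\to +1$, rather than from the leading-order behavior of the objective itself.) With that addendum every subsequential limit of $c_p$ lies in the open interval and solves $p(c)=0$, hence equals $c_{+1}$ by part (i), and compactness upgrades this to the full limit. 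So the proposal is essentially sound, with one fillable gap in the endpoint analysis.
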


There is another p-average that is situated directly between the 2-average and $\infty$-average, namely the $4$-average. If $u$ is a smooth function with non-vanishing gradient  we defined the game p-Laplacian as

\begin{equation}
    \Delta_p^G u:=\frac{1}{p}|\nabla u|^{2-p} \operatorname{div}\left(|\nabla u|^{p-2} \nabla u\right)
\end{equation}
By expanding the derivatives, one obtains

\begin{equation}
    \Delta_p^G u=\frac{1}{p} \Delta_2 u+\frac{p-2}{p}|\nabla u|^{-2} \sum_{i, j} \frac{\partial u}{\partial x_i} \frac{\partial u}{\partial x_j} \frac{\partial^2 u}{\partial x_i \partial x_j}
\end{equation}
the second operator on the right is the definition of the game $\infty$-Laplacian, that is 

\begin{equation}
    \Delta_{\infty}^G u:=|\nabla u|^{-2} \sum_{i, j} \frac{\partial u}{\partial x_i} \frac{\partial u}{\partial x_j} \frac{\partial^2 u}{\partial x_i \partial x_j}
\end{equation}

and the first term on the right is $ \Delta_2^G u=\frac{1}{2} \Delta_2 u$. So that, $\Delta_p^G u=\frac{2}{p} \Delta_2^G u+ \frac{p-2}{p}\Delta_{\infty}^G u.  $

One can define 
\begin{equation}
    \Delta_1^G u= 2 \Delta_2^G u - \Delta_{\infty}^G u.  
\end{equation}
and recover the relationship
\begin{equation}
    \Delta_p^G u=\frac{1}{p} \Delta_1^G u+ \frac{1}{q}\Delta_{\infty}^G u.  
\end{equation}

When $p=4$ we have an intermediary between the 2 and $\infty$ game Laplacian, that is
\begin{equation}
    \Delta_4^G u=\frac{1}{2} \Delta_2^G u+ \frac{1}{2}\Delta_{\infty}^G u.  
\end{equation}

Formally,  $\Delta_p^G u$ can be seen as a singular, quasi-linear operator with domain a nonzero vector and a symmetric matrix, representing the gradient and Hessian matrix for u.  The operator is $F:\left(\mathbb{R}^n \backslash\{0\}\right) \times \mathcal{S}^n \rightarrow \mathbb{R}$, where
$$
F(\xi, A)=\operatorname{tr}[M(\xi) A],
$$
with
$$
M(\xi)=\frac{1}{p} I+\left(1-\frac{2}{p}\right) \frac{\xi \otimes \xi}{|\xi|^2}
$$
for $\xi \in \mathbb{R}^n \backslash\{0\}$.  That is with $\xi=\nabla u(x) \text { and } A=\nabla^2 u(x) = D^2u(x)$ one has

\begin{equation}\label{eqn2.9}
\Delta_p^G u(x) = \frac{1}{p}\operatorname{tr}(A)+\frac{p-2}{p} \frac{\langle A \xi, \xi\rangle}{|\xi|^2}.
\end{equation}
\vspace{0.2cm}
\subsection{A statistical description of the 4-average}
While p-averages for $p=1,2, \infty$ have known descriptive solutions, is seems largely unknown that  $A_4^X(\phi)$ has an explicit solution with a statistical interpretation.

\begin{lemma}\label{lem2.7} If $\nu$ is a probability measure, then $A_4^X(\phi)$, the unique minimizer for the polynomial $$p(x) =\int_X\left|\phi(y)-x\right|^{4-2}\left[\phi(y)-x\right] d \nu =\int_X\left(\phi(y)-x\right)^{3} d \nu=0$$  can be described as in terms of the mean, variance, and skewness of $\phi$, in particular  

$$A_4^X(\phi) =\frac{\sigma}{\sqrt[3]{2}}\left(\sqrt[3]{\kappa+\sqrt[2]{\kappa^2+4}}+\sqrt[3]{\kappa-\sqrt[2]{\kappa^2+4}}\right) + \bar{t}$$

Where $\bar{t}$ is the mean, $\sigma$ is the variance, and $\kappa$ is the skewness of $\phi.  $
\end{lemma}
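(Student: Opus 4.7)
The proof is essentially an application of Cardano's formula to the cubic equation defining $A_4^X(\phi)$, after a standard substitution that recenters the polynomial at the mean. I would proceed in four short steps.

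First, expand $(\phi(y)-x)^3 = \phi(y)^3 - 3x\phi(y)^2 + 3x^2\phi(y) - x^3$ and integrate against $\nu$ to turn the defining equation into a cubic in $x$ with coefficients given by the raw moments $m_k = \int_X \phi^k\,d\nu$:
\begin{equation*}
x^3 - 3 m_1 x^2 + 3 m_2 x - m_3 = 0.
\end{equation*}

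Next, translate by the mean: set $x = y + \bar{t}$ with $\bar{t} = m_1$. A direct computation collapses the quadratic term and turns the constant and linear coefficients into central moments. Using $m_2 - m_1^2 = \sigma^2$ (the variance) and $m_3 - 3 m_2 m_1 + 2 m_1^3 = \mu_3$ (the third central moment), the equation reduces to the depressed cubic
\begin{equation*}
y^3 + 3\sigma^2 y - \mu_3 = 0.
\end{equation*}

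Third, apply Cardano's formula with $p = 3\sigma^2 \ge 0$ and $q = -\mu_3$. The Cardano discriminant $q^2/4 + p^3/27 = \mu_3^2/4 + \sigma^6$ is nonnegative, so the cubic admits a unique real root expressible with real cube roots, namely
\begin{equation*}
y = \sqrt[3]{\tfrac{\mu_3}{2} + \sqrt{\tfrac{\mu_3^2}{4} + \sigma^6}} + \sqrt[3]{\tfrac{\mu_3}{2} - \sqrt{\tfrac{\mu_3^2}{4} + \sigma^6}}.
\end{equation*}
Finally, substitute $\mu_3 = \kappa \sigma^3$ (the definition of the skewness $\kappa$), factor $\sigma^3$ out of each radicand, and then $2$ out of each cube root, to obtain
\begin{equation*}
y = \frac{\sigma}{\sqrt[3]{2}}\left(\sqrt[3]{\kappa + \sqrt{\kappa^2 + 4}} + \sqrt[3]{\kappa - \sqrt{\kappa^2 + 4}}\right),
\end{equation*}
which added to $\bar t$ gives the stated formula. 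Uniqueness of the minimizer, hence of the root we select, is exactly the $p>1$ part of the $A_p$ existence and uniqueness properties quoted before the lemma.

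There is no real obstacle; the only mild care is in step three, where one should note that even though $\mu_3^2/4 + \sigma^6 \ge 0$ forces exactly one real root of the depressed cubic, Cardano's formula must be read with real cube roots (the ``casus irreducibilis'' does not occur because $p\ge 0$). A small notational caveat worth flagging in the statement is that $\sigma$ in the displayed formula plays the role of the standard deviation $\sqrt{\operatorname{Var}(\phi)}$, not the variance itself, as the degree count in $\sigma^3$ versus $\mu_3$ above makes clear.
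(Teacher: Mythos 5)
Your proof is correct and follows essentially the same route as the paper: expand the defining equation into a cubic in the raw moments, depress it by translating by the mean so that $p=3\sigma^2$ and $q=-\kappa\sigma^3$, and apply Cardano's formula, using the sign of the discriminant to select the unique real root. Your closing remark is also well taken --- the paper's own computation treats $\sigma$ as the standard deviation (so that $\sigma^2$ is the variance), even though the lemma's statement calls $\sigma$ the variance.
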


\begin{proof}
\vspace{0.1in}
To describe the minimizer algebraically we expand the cubic equation, getting 
 
 \begin{equation}\label{eq:cubic}
     x^3-3x^2 \int_X \phi(y) d \nu +3x \int_X \phi(y)^2 d \nu  - \int_X \phi(y)^3 d \nu=0
 \end{equation}

We follow the classical solution of Cardano and set $a=1, b= -3 \int_X \phi(y) d \nu, c=3\int_X \phi(y)^2 d \nu, d=-\int_X \phi(y)^3 d \nu$ and depress the cubic equation~\eqref{eq:cubic} meaning rewrite it as $ t^3+pt+q=0 $ using the change of variable $x=t-\frac{b}{3a}=t+\bar{t}$.
$$
\begin{aligned}
&p=\frac{3 a c-b^2}{3 a^2} \\
&q=\frac{2 b^3-9 a b c+27 a^2 d}{27 a^3}
\end{aligned}
$$

In terms of classical statistical quantities $$p=\frac{9\int_X \phi(y)^2 d \nu- (-3 \int_X \phi(y) d \nu)^2}{3} =3\sigma^2$$

while

\begin{align*}
 q &=\frac{2 b^{3}-9 a b c+27 a^{2} d}{27 a^{3}}&\\&=\frac{2(-3 \int_X \phi(y) d \nu)^3-9(-3 \int_X \phi(y) d \nu)3\int_X \phi(y)^2 d \nu+27(-\int_X \phi(y)^3 d \nu}{27}&\\ &=-2\bar{t}^3+3\bar{t}(\sigma^2+\bar{t}^2)-\frac{\sum_{i}t_i^3}{1}&\\&=\bar{t}^3+3\bar{t}\sigma^2-\int_X \phi(y)^3 d \nu&\\&=-\sigma^3(\frac{{\int_X \phi(y)^3 d \nu}-3\bar{t}\sigma^2-\bar{t}^3}{\sigma^3})&\\&=-\sigma^3 \kappa\text{ where $\kappa$ is the skewness.}&\\  
\end{align*}
Since the discriminant $\Delta =-(4p^3+27q^2)<0$, $t^3+pt+q=0$ has one real root and two non-real complex conjugate root with the real root given by $$t=\sqrt[3]{\frac{-q}{2}+\sqrt[2]{\frac{q^2}{4}+\frac{p^3}{27}}}+\sqrt[3]{\frac{-q}{2}-\sqrt[2]{\frac{q^2}{4}+\frac{p^3}{27}}}$$

Recalling that $x=t+\bar{t}$ and substituting in the statistical meanings for p and q, we have $$t=\sqrt[3]{\frac{\sigma^3 \kappa}{2}+\sqrt[2]{\frac{(-\sigma^3 \kappa)^2}{4}+\frac{(3\sigma^2)^3}{27}}}+\sqrt[3]{\frac{\sigma^3 \kappa}{2}-\sqrt[2]{\frac{(-\sigma^3 \kappa)^2}{4}+\frac{(3\sigma^2)^3}{27}}}$$
and finally $$x=\sqrt[3]{\frac{\sigma^3 \kappa}{2}+\sqrt[2]{\frac{(-\sigma^3 \kappa)^2}{4}+\frac{(3\sigma^2)^3}{27}}}+\sqrt[3]{\frac{\sigma^3 \kappa}{2}-\sqrt[2]{\frac{(-\sigma^3 \kappa)^2}{4}+\frac{(3\sigma^2)^3}{27}}} + \bar{t}$$ which can be simplified as $$A_4^X(\phi) = x=\frac{\sigma}{\sqrt[3]{2}}\left(\sqrt[3]{\kappa+\sqrt[2]{\kappa^2+4}}+\sqrt[3]{\kappa-\sqrt[2]{\kappa^2+4}}\right) + \bar{t}.$$
\end{proof}

\subsection{The uniqueness of p=4}
 One might hope that other p-averages have explicit solutions, especially p=6. From algebra we know that a general quintic equation is not solvable, however the 6-average solves a special class of quintic equations which might be solvable. We show this is not the case. Consider the random data set X=\{1,6,11,13,19\} with the counting measure. 
 $A_6^X = x$ solves the equation, $$6f(x) = h'(x) = 6(x-1)^5 + 6(x-6)^5 + 6(x-11)^5 + 6(x-13)^5 + 6(x-19)^5=0.$$

Expanding this out and depressing by the average, that is, using the substitution x = y+10, leads to the depressed quintic polynomial $$
p(x)=x^5+376 x^3+72 x^2+13460 x+156.
$$

The following theorem from Dummit\cite{Dummit} decides when $p(x)$ is solvable by radicals.

\begin{theorem}\label{thm:thm2.3}
The irreducible quintic $p(x)=x^5+px^3+qx^2+rx+s \in  \mathbf{Q}[X]$ is solvable by radicals if and only if the polynomial $p_{20}(x)$ has a rational root where $p_{20}(x)$ is the resolvent sextic polynomial corresponding to $p(x)$.  
\end{theorem}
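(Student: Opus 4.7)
The plan is to use Galois theory to reduce the analytic statement ``solvable by radicals'' to a purely group-theoretic condition on $\mathrm{Gal}(p/\mathbb{Q})$, and then realize that condition as the existence of a rational root of a concrete auxiliary polynomial of degree six.

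First, I would invoke the classical equivalence that an irreducible polynomial over $\mathbb{Q}$ is solvable by radicals if and only if its Galois group is solvable. For an irreducible quintic, $G = \mathrm{Gal}(p/\mathbb{Q})$ is a transitive subgroup of $S_5$; a standard classification shows the transitive subgroups of $S_5$ are $S_5$, $A_5$, the Frobenius group $F_{20}$ of order $20$ (the normalizer of a Sylow $5$-subgroup), the dihedral group $D_{10}$, and the cyclic group $C_5$. Because $A_5$ is simple and non-abelian, the solvable ones are exactly $F_{20}, D_{10}, C_5$, and each of these is a subgroup of some conjugate of $F_{20}$ inside $S_5$. Thus $p(x)$ is solvable by radicals if and only if $G$ lies in a conjugate of $F_{20}$.

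Next, since $[S_5 : F_{20}] = 6$, any polynomial expression $\theta = \theta(x_1,\ldots,x_5)$ in the roots of $p$ that is $F_{20}$-invariant but not $S_5$-invariant has an $S_5$-orbit $\{\theta_1,\ldots,\theta_6\}$ of size exactly $6$. Define the resolvent sextic
\[
p_{20}(x) = \prod_{i=1}^{6}(x - \theta_i).
\]
Its coefficients are symmetric in $x_1,\ldots,x_5$ and therefore, by the fundamental theorem of symmetric polynomials and Newton's identities, lie in $\mathbb{Q}[p,q,r,s]$. The Galois correspondence now produces the theorem: $G$ is contained in a conjugate of $F_{20}$ if and only if $G$ fixes some $\theta_i$, if and only if some $\theta_i \in \mathbb{Q}$, if and only if $p_{20}(x)$ has a rational root. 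The $\theta_i$ are distinct because $p(x)$ is irreducible and $\theta$ is not $S_5$-invariant, so no collapsing of the orbit can occur.

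The main obstacle, and Dummit's actual contribution, is computational rather than conceptual: one must exhibit a concrete $F_{20}$-invariant $\theta$ and then write the coefficients of $p_{20}(x)$ explicitly as polynomials in $p,q,r,s$ so that the criterion can be tested in practice. A natural choice (after labeling the roots so that $F_{20}$ acts as affine maps on $\mathbb{Z}/5\mathbb{Z}$) is
\[
\theta = (x_1 x_2 + x_2 x_3 + x_3 x_4 + x_4 x_5 + x_5 x_1) - (x_1 x_3 + x_3 x_5 + x_5 x_2 + x_2 x_4 + x_4 x_1),
\]
which is preserved by the cyclic shift $x_i \mapsto x_{i+1}$ and by a suitable involution generating $F_{20}$. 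The remaining work is to compute the power sums $\sum \theta_i^k$ in terms of the elementary symmetric functions of the roots, convert via Newton's identities to coefficients, and simplify. This step is where the bookkeeping becomes heavy and where an explicit closed form for $p_{20}(x)$ must finally be pinned down.
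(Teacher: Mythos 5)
This theorem is not proved in the paper at all: it is quoted from Dummit's \emph{Solving solvable quintics} and used as a black box to show that the $6$-average of $\{1,6,11,13,19\}$ is not expressible in radicals. Your outline correctly reconstructs the skeleton of Dummit's actual argument: solvability by radicals is equivalent to solvability of the Galois group; among the transitive subgroups of $S_5$ the solvable ones are exactly those contained in a conjugate of $F_{20}$; and since $[S_5:F_{20}]=6$, containment in a conjugate of $F_{20}$ is detected by a rational root of the degree-six resolvent attached to a polynomial in the roots whose stabilizer is exactly $F_{20}$. Note also that because $F_{20}$ is maximal in $S_5$ and has order not dividing $|A_5|$, any root-polynomial fixed by $F_{20}$ but not by all of $S_5$ automatically has stabilizer exactly $F_{20}$, so the orbit-size claim needs no further checking once a genuine $F_{20}$-invariant is in hand.

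There are, however, two concrete defects. First, the specific $\theta$ you propose is not $F_{20}$-invariant. $F_{20}=\mathrm{AGL}(1,5)$ is generated by the $5$-cycle $(1\,2\,3\,4\,5)$ together with a $4$-cycle such as $(2\,3\,5\,4)$ (multiplication by $2$ on $\mathbb{Z}/5\mathbb{Z}$), not by the $5$-cycle and an involution; the latter generate only the dihedral group $D_{10}$ of order $10$. The $4$-cycle interchanges the ``pentagon'' sum $x_1x_2+\dots+x_5x_1$ with the ``pentagram'' sum $x_1x_3+\dots+x_4x_1$, hence sends your $\theta$ to $-\theta$; its stabilizer is $D_{10}$ and its $S_5$-orbit has $12$ elements, not $6$. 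You must either pass to $\theta^2$ (recovering Cayley's classical resolvent) or use Dummit's actual invariant $\sum_{i}x_i^2\left(x_{i+1}x_{i-1}+x_{i+2}x_{i-2}\right)$ with indices mod $5$. Second, the distinctness of $\theta_1,\dots,\theta_6$ as elements of the splitting field does not follow from ``$p$ is irreducible and $\theta$ is not $S_5$-invariant'': irreducibility gives distinct roots $x_i$ but says nothing a priori about collisions among the $\theta_i$. Distinctness is genuinely needed for the direction ``rational root $\Rightarrow$ solvable,'' since if $\theta_i=\theta_j$ as field elements the Galois group can fix the rational value $\theta_i$ while permuting the indices, and then one cannot conclude $G\subseteq\mathrm{Stab}_{S_5}(\theta_i)$. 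Dummit verifies this separately for his resolvent; as written, your sketch leaves that implication with a gap.
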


In the above case, the resolvent sextic polynomial is 
\begin{flalign*}
p_{20}(x)&\\&=x^6+107680x^5-4167324992x^4-633810584502272x^3&\\&-633810584502272x^2-2545206831640273748008x&\\&+7102938318637196554440048.    
\end{flalign*}

Checking all the factors of 7102938318637196554440048 with a computer algebra system show there is no rational root.  

\vspace{0.5in}
\section{Asymptotic Mean Value Properties}

Having discussed various properties of p-averaging, we now employ them to study the game p-Laplacian.  We first quote the following definition from \cite{ManParRoss} and its importance in numerical schemes.  Here we use $S = \Omega_E$, the extension of a domain $\Omega$.
	\begin{definition}\label{def2:def2}
		We say that family of averages $\left\{A_{\varepsilon}\right\}_{\varepsilon>0}$ is an \textbf{(asymptotic) mean value property AMVP} for the p-Laplacian if for every function $\phi \in C_b^\infty(\Omega_E)$ with $\nabla \phi \neq 0$ we have
		\begin{equation}\phi(x)=A_{\varepsilon}[\phi](x)+c_{p,n} \varepsilon^{2}\left(-\Delta_{p}^{G} \phi(x)\right)+o\left(\varepsilon^{2}\right)\end{equation}
	\end{definition}
	where the constant in $o\left(\varepsilon^{2}\right)$ can be taken uniformly for all $x \in \Omega_E$
The usefulness of this definition is that the operator defined via
\begin{equation}S(\varepsilon, x, \phi(x), \phi)=\left\{\begin{array}{ccc}
\frac{1}{c_{p, n} \varepsilon^2}\left(\phi(x)-A_{\varepsilon}[\phi](x)\right) & \text { if } & x \in \Omega \\
\phi(x)-G(x) & \text { if } & x \in 
\text{ extended strip }
\end{array}\right.
\end{equation}

leads to a fixed point problem and the scheme 

\begin{equation}
    S\left(\varepsilon, x, u_{\varepsilon}(x), u_{\varepsilon}\right)=0 \quad \text { for all } \quad x \in \Omega_E \text {. }
\end{equation}

\noindent satisfies the needed stability, monotonicity and consistency conditions in Barles and Souganidis fundamental paper \cite{Souganidis} leading to convergence of a set of functions converging to the solution to (1.6).

\subsection{Previous results for continuous averages}

In the context of numerical schemes, the AMVP was essentially shown in 2 dimensions in \cite{Oberman} for mixed averages on the boundary of balls,  $\partial B_{\varepsilon}(x)$ $$A[\phi](x) =\alpha \left(\frac{\max _{\partial B_{\varepsilon}(x)} \phi +\min _{\partial B_{\varepsilon}(x)} \phi}{2}\right)+\beta \frac{1}{|{\partial B_{\varepsilon}(x)}|} \int_{\partial B_{\varepsilon}(x)} \phi(y) d y$$
 with $c_{p,2} = \frac{1}{2}$.  In Falcone et al. \cite{SmitsAveraging}, for $p>2$ and in 2 dimensions, the set of averages $A_p^X$ where $X=\partial B_{\varepsilon}(x)$ was essentially shown to have the AMVP with $c_{p,2} = \frac{1}{2}$ as well.
 
 \vspace{0.1in}
 As previously mentioned, in \cite{Manfredi} the tug-of-war inspired averages
 $$A[\phi](x) =\left(\frac{p-2}{p+n}\right)\left(\frac{\max _{B_{\varepsilon}(x)} \phi +\min _{B_{\varepsilon}(x)} \phi}{2}\right)+\left(\frac{2+n}{p+n}\right)\frac{1}{|{B_{\varepsilon}(x)}|} \int_{B_{\varepsilon}(x)} \phi(y) d y$$ produce an AMVP with $c_{p,n} = \frac{p}{2(n+p)}$. 
 
 \vspace{0.1in}
 Some authors use the title, normalized p-Laplacian and may not include a factor $\frac{1}{p}$ in their definitions which will change the constants $c_{p,n}$ accordingly.  In \cite{Ishiwata}, with $X=\overline{B_{\varepsilon}(x)}$ the authors derive asymptotic formulas in all dimensions for smooth functions with non-vanishing gradients, 
 
 $$A_p^X(v) =v(x)+\frac{1}{2} \frac{p}{n+p}\Delta_p^G v(x) \varepsilon^2+o\left(\varepsilon^2\right)$$ as $\varepsilon \rightarrow 0$. This yields an AMVP for p-averages on solid balls with
 
 \begin{equation}
     c_{p,n} =\frac{p}{2(n+p)}.
 \end{equation}

 In the same article the authors study a family of p-averages with $X=\partial B_{\varepsilon}(x)$, and find an asymptotic mean value principle on spheres with
 
 \begin{equation}
     c_{p,n}=\frac{p}{2(n+p-2)}. 
 \end{equation}

  When $n=2$ this reduces to the same $c_{p,2}=\frac{1}{2}$ for all p.

 \vspace{0.2in}
 \textbf{Remark 1} When context is clear and $X$ depends on $\varepsilon$, we will follow the notation in definition 3.1 and write $A_{\varepsilon}^p$ in place of $A_p^{X}$.
 
 \vspace{0.2in} 
 
 \textbf{Remark 2} A common thread among all previous results was that a numerical scheme for and $p \neq 2$ would necessarily involve arbitrarily small directional resolution in addition to small spatial resolution.  This is because sampling over a ball or sphere requires vectors in all possible directions. In the next sections, we find the first nontrivial examples where the directional resolution can itself be discrete and still yield an asymptotic mean value principle.  

\subsection{Two dimensional discrete results}
\vspace{0.1in}

In two dimensions it is clearer to formulate the directions in using the complex exponential or trigonometric functions. For even integer value $p$, say $p=2k$ we define a family of averages indexed by $\epsilon >0$ by describing the set with 2k+2 equally spaced vertices on the boundary of the ball as $S = S_{\epsilon} = \left\{  x+\epsilon (cos(\frac{2 \pi j}{2k+2}),sin(\frac{2 \pi j}{2k+2})), j=0, \ldots, 2k+1 \right\}$. One can also think of this as a set of vertices on a polygon in the complex plane. We then define the p-averaging as $ A_{2k}^{S} = A_{\varepsilon}^{2k} $.  \\

We simplify matters, set $\xi_j =\epsilon \widetilde{\eta_{j}} = \epsilon (cos(\widetilde{\theta_j)},sin(\widetilde{\theta_j)}) = \epsilon (cos(\frac{2 \pi j}{2k+2}),sin(\frac{2 \pi j}{2k+2})) $ so that  $S = S_{\epsilon} =  \left\{\left(x+ \xi_j \right), j=0, \ldots, 2k+1 \right\}$. We follow the proof in \cite{SmitsAveraging} for two dimensions which is useful for such applications as inpainting and image processing. Throughout we assume $\varepsilon > 0$ is small enough so that with $x \in \Omega$, $B(x,\varepsilon) \subset \Omega_E$.  
Before proceeding to our theorems for the p-Laplacian in the plane, we give a motivation for why such theorems might occur. The mean value theorem for all balls and all harmonic functions is known in complex variables, but a lesser known result is the following due to \cite{Walsh}:

\vspace{0.2in}
\begin{theorem}\label{thm:complexpoly}
Any polynomial $p(z)$ of degree $n$ over the complex plane, satisfies a mean value property on any regular polygon with $2n$ or more vertices, namely if $\theta$ is an arbitrary angle and we set  $z_{j}=z+r e^{i \theta} e^{\frac{2 \pi i}{2 n} j}=z+r e^{\theta i} e^{\frac{\pi i}{n} j}$ where $j=0,1,2, \ldots, 2 n-1$, then $p(z)=\frac{\sum_{j=0}^{2 n-1} p\left(z_{j}\right)}{2 n}$. Here $z$ is the center of the polygon.
 \end{theorem}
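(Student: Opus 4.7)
The plan is to reduce the identity to an elementary orthogonality statement for roots of unity. First I would expand $p$ in the Taylor basis about $z$, writing
\[p(w) = \sum_{k=0}^{n} a_k (w-z)^k, \qquad a_0 = p(z),\]
and then substitute $w = z_j = z + re^{i\theta} e^{\pi i j / n}$, which gives
\[p(z_j) = \sum_{k=0}^{n} a_k\, r^k e^{ik\theta} \bigl(e^{\pi i k/n}\bigr)^j.\]

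Next I would average over $j$ and interchange the two finite sums:
\[\frac{1}{2n}\sum_{j=0}^{2n-1} p(z_j) = \sum_{k=0}^{n} a_k\, r^k e^{ik\theta}\cdot \frac{1}{2n}\sum_{j=0}^{2n-1}\bigl(e^{\pi i k/n}\bigr)^j.\]
For $k=0$ the inner sum equals $1$, contributing $a_0 = p(z)$. For $1 \leq k \leq n$ the number $\omega_k := e^{\pi i k/n}$ is a $2n$-th root of unity that is not equal to $1$, because $\pi k/n$ lies in $(0,\pi]$ and contains no nonzero integer multiple of $2\pi$. The geometric series therefore evaluates to $(\omega_k^{2n}-1)/(\omega_k-1) = 0$, and every term with $k \geq 1$ drops out, leaving exactly $p(z)$.

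For the generalization to a regular polygon with $N \geq 2n$ vertices instead of exactly $2n$, the identical computation applies with $z_j = z + re^{i\theta} e^{2\pi i j / N}$: the bound $N \geq 2n$ (indeed $N > n$ would suffice) ensures $e^{2\pi i k/N} \neq 1$ for $1 \leq k \leq n$, so the same cancellation occurs. There is really no obstacle here — the entire content is the orthogonality of the characters $j \mapsto e^{2\pi i j k/N}$ on $\mathbb{Z}/N\mathbb{Z}$ against the frequencies $1,\ldots,n$, and the hypothesis on the number of vertices is precisely what guarantees these frequencies remain nontrivial modulo $N$.
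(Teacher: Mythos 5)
Your proof is correct and is essentially the paper's own argument: both reduce the identity to the vanishing of the geometric sums $\sum_{j=0}^{2n-1} e^{\pi i jk/n}$ for $1 \leq k \leq n$, i.e.\ the orthogonality of nontrivial $2n$-th roots of unity. Expanding $p$ in the Taylor basis about $z$ rather than applying the binomial theorem to each monomial $z_j^m$ is a minor streamlining, and your observation that $N > n$ vertices already suffices cleanly covers the ``$2n$ or more'' clause that the paper's proof only treats for exactly $2n$.
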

 We repeat the simple proof which relies only on geometric series and the binomial theorem.   
 \begin{proof}
    
    It's enough to show termwise that $\sum_{j=0}^{2n-1}(z+re^{\theta i}e^{\frac{\pi}{n}ij})^m=2n z^m$ holds for any $m=1,2,\dots,n$ since it holds trivially for $m=0$.

  \begin{flalign*}
  \sum_{j=0}^{2n-1}(z+re^{\theta i}e^{\frac{2\pi}{2n}ij})^m
  &\\&=\sum_{j=0}^{2n-1} \sum_{k=0}^{m}\binom{m}{k}   z^{m-k}r^k e^{\theta i k}e^{\frac{\pi}{n}ijk}&\\&=\sum_{k=0}^{m} \sum_{j=0}^{2n-1}\binom{m}{k}   z^{m-k}r^k e^{\theta i k}e^{\frac{\pi}{n}ijk}&\\
  \end{flalign*}

so that
  \begin{flalign*}
  \sum_{j=0}^{2n-1}(z+re^{\theta i}e^{\frac{2\pi}{2n}ij})^m
  &\\&=2 n z^{m}+m z^{m-1}r e^{\theta i } \frac{1-e^{2\pi i}}{1-e^{\frac{\pi}{n}i}}+\dots+&\\&\binom{m}{m-1}z r^{m-1} e^{(m-1) i \theta  }\frac{1-e^{2(m-1)\pi i}}{1-e^{\frac{(m-1)\pi}{n}i}}+r^m e^{m i \theta}\frac{1-e^{2 m\pi i}}{1-e^{\frac{m \pi}{n}i}}&\\
  \end{flalign*}

  For $t$ an even integer, $1-e^{t\pi i}= 0$. Thus, $1-e^{2\pi i}=1-e^{4\pi i}=\dots =1-e^{2(m-1)\pi i}=1-e^{2m\pi i}=0$ while none of the denominators $1-e^{\frac{ \pi}{n}i},1-e^{\frac{2 \pi}{n}i},\dots ,1-e^{\frac{(m-1) \pi}{n}i}$ are zero as $0 < m \leq n.$ So all the non-leading terms are zero implying 
     $$\sum_{j=0}^{2n-1}(z+re^{\theta i}e^{\frac{2\pi}{2n}ij})^m=2n z^m \text{ for } m=1,2,\dots ,n \text{ }$$
 
\end{proof}

\newpage
\begin{theorem}\label{thm:Theorem2.6}

$ The family \left\{A_{\varepsilon}^{4}\right\}_{\varepsilon >0}$ has the asymptotic mean value property for the 4-Laplacian with $c_{4,2} = \frac{1}{2}.$ 
\end{theorem}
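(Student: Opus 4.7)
The plan is to work with the implicit characterisation of the 4-average from the bulleted property following Definition 2.2: $\mu = A_\varepsilon^4[\phi](x)$ if and only if
$F(\mu):=\sum_{j=0}^{5}(\phi(x+\xi_j)-\mu)^3=0$, where $\xi_j=\varepsilon(\cos(\pi j/3),\sin(\pi j/3))$ are the six hexagonal sample points. I would Taylor expand $\phi(x+\xi_j)-\phi(x)=a_j+b_j+O(\varepsilon^3)$ with $a_j=\langle\nabla\phi(x),\xi_j\rangle$ and $b_j=\tfrac{1}{2}\xi_j^{T}D^{2}\phi(x)\,\xi_j$, write $\mu=\phi(x)+\delta$, and first argue $\delta=O(\varepsilon^2)$ by a Newton / implicit-function estimate: since $F(\phi(x))=\sum_j a_j^3+O(\varepsilon^4)$ while $F'(\phi(x))=-3\sum_j a_j^2+O(\varepsilon^3)\sim -9|\nabla\phi(x)|^{2}\varepsilon^{2}$ is bounded away from $0$ (using $\nabla\phi\neq 0$), the root of $F$ must lie within $O(\varepsilon^2)$ of $\phi(x)$ once $\sum_j a_j^3$ is shown to vanish.

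Vanishing of $\sum_j a_j^3$ is an instance of Theorem \ref{thm:complexpoly}: the hexagon has $2\cdot 3=6$ vertices, so the mean-value property holds for polynomials of degree $\leq 3$, and $a_j^3$ has degree $3$ in $\xi_j$. Direct verification via $\cos^{3}\beta=\tfrac{3}{4}\cos\beta+\tfrac{1}{4}\cos 3\beta$ reduces the sum to $\sum_{j=0}^{5}e^{i\theta_j}$ and $\sum_{j=0}^{5}e^{3i\theta_j}$, both zero by geometric series. The same orthogonality gives $\sum_j a_j^2=3|\nabla\phi(x)|^{2}\varepsilon^{2}$. Expanding $F(\mu)=0$ to order $\varepsilon^4$ then reads
\begin{equation*}
0=\sum_j a_j^3+3\sum_j a_j^2(b_j-\delta)+O(\varepsilon^{5})=3\sum_j a_j^2 b_j-3\delta\sum_j a_j^2+O(\varepsilon^{5}).
\end{equation*}

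The crux is evaluating $\sum_j a_j^2 b_j$, an angular polynomial of degree $4$ that is \emph{not} killed by the Walsh-type identity. The plan is to write the Hessian form as $b_j=\tfrac{\varepsilon^2}{2}\bigl(\tfrac{1}{2}\Delta\phi+\tfrac{H_{11}-H_{22}}{2}\cos 2\theta_j+H_{12}\sin 2\theta_j\bigr)$ and use $\cos^2(\theta_j-\alpha)=\tfrac{1}{2}+\tfrac{1}{2}\cos(2\theta_j-2\alpha)$, where $\alpha$ denotes the angle of $\nabla\phi(x)$. The resulting product yields $\theta_j$-independent pieces together with modes in $\cos(2\theta_j-c)$ and $\cos(4\theta_j-c)$. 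Crucially both $\sum_j e^{2i\theta_j}$ and $\sum_j e^{4i\theta_j}$ vanish, because $e^{2i\theta_j}$ and $e^{4i\theta_j}$ range through the primitive cube roots of unity (each taken twice). Only the constant pieces survive, and recognising $\tfrac{1}{2}(H_{11}-H_{22})\cos 2\alpha+H_{12}\sin 2\alpha=\Delta_\infty^{G}\phi(x)-\tfrac{1}{2}\Delta\phi(x)$ they combine into
\begin{equation*}
\sum_j a_j^2 b_j=\frac{3|\nabla\phi(x)|^{2}\varepsilon^{4}}{4}\Bigl[\tfrac{1}{2}\Delta\phi(x)+\Delta_\infty^{G}\phi(x)\Bigr]=\frac{3|\nabla\phi(x)|^{2}\varepsilon^{4}}{2}\,\Delta_4^{G}\phi(x),
\end{equation*}
the final equality being the identity $\Delta_4^{G}=\tfrac{1}{4}\Delta+\tfrac{1}{2}\Delta_\infty^{G}$ derived earlier in the excerpt.

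Dividing by $\sum_j a_j^2=3|\nabla\phi(x)|^{2}\varepsilon^{2}$ then yields $\delta=\tfrac{\varepsilon^{2}}{2}\Delta_4^{G}\phi(x)+o(\varepsilon^{2})$, which is exactly the AMVP with $c_{4,2}=\tfrac{1}{2}$; uniformity of the $o(\varepsilon^{2})$ term follows from the uniform bounds on derivatives of $\phi\in C_b^{\infty}(\Omega_E)$. The main obstacle is the Fourier bookkeeping above: Theorem \ref{thm:complexpoly} provides free cancellation only up to degree $3$, so the degree-$4$ survivors must be tracked by hand. What makes $p=4$ on the hexagon work so cleanly is that the two surviving angular modes recombine into exactly the linear combination $\tfrac{1}{4}\Delta+\tfrac{1}{2}\Delta_\infty^{G}$ that \emph{defines} the game $4$-Laplacian; a different regular polygon or a different even $p$ would generically leave surviving terms that do not match $\Delta_p^{G}$.
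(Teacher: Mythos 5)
Your proposal is correct and follows essentially the same route as the paper: both reduce to the first-order (cubic) condition for the 4-average over the hexagon, Taylor expand, kill the cubic term by antipodal symmetry, and evaluate the surviving quartic angular sums — your roots-of-unity bookkeeping ($\sum_j e^{2i\theta_j}=\sum_j e^{4i\theta_j}=0$) is just a repackaging of the paper's identities $\sum\cos^3\theta_j=\sum\cos^3\theta_j\sin\theta_j=0$ and $\sum\cos^4\theta_j=\tfrac{3}{4}\sum\cos^2\theta_j$. Your explicit Newton/implicit-function estimate for $\delta=O(\varepsilon^2)$ is a welcome tightening of a step the paper passes over by appealing to continuity of $A^4_\varepsilon$, but it does not change the substance of the argument.
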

\begin{proof}

\vspace{0.1in}
Some manipulations of the  $\argmin$ using the translation part of affine invariance of the p-average shows

$$ A_{\varepsilon}^{4}[\phi](x) - \phi(x) = \argmin_{c \in \mathbb{R}} \sum_{j=0}^{5}\left|(\phi(x + \xi_j)-\phi(x))-c\right|^{4}$$

\vspace{0.1in}
and by the scaling part of affine invariance

$$ \frac{A_{\varepsilon}^{4}[\phi](x) - \phi(x)}{\frac{\varepsilon^2}{2}}= \argmin_{d \in \mathbb{R}} \sum_{j=0}^{5}\left|(\phi(x + \xi_j)-\phi(x))-\frac{\varepsilon^2}{2} d\right|^{4}$$

Now employing Taylor's theorem $$\phi(x + \xi_j) - \phi(x) = \langle \varepsilon \nabla \phi(x) , \widetilde{\eta_j} \rangle + \frac{\varepsilon^2}{2} \langle D^2\phi(x)\widetilde{\eta_j} , \widetilde{\eta_j} \rangle + o(\varepsilon^2)$$

By continuity of $A_{\epsilon}^4$, in the limit one could replace  $\phi$ with the quadratic function up to second order derivatives, that is in the limit

$$
\frac{A_{\varepsilon}^{4}[\phi](x) - \phi(x)}{\frac{\varepsilon^2}{2}} = \argmin_{d \in \mathbb{R}} \sum_{j=0}^{5}\left|    \langle \varepsilon \nabla \phi(x) , \widetilde{\eta_j} \rangle + \frac{\varepsilon^2}{2} \langle D^2\phi(x)\widetilde{\eta_j} , \widetilde{\eta_j} \rangle    -\frac{\varepsilon^2}{2} d  \right|^{4}
$$

 Factoring out an $\varepsilon$ from the $\argmin$,
 
 $$
 \frac{A_{\varepsilon}^{4}[\phi](x) - \phi(x)}{\frac{\varepsilon^2}{2}} = \argmin_{d \in \mathbb{R}} \sum_{j=0}^{5}\left|\langle \nabla \phi(x) , \widetilde{\eta_j} \rangle + \frac{\varepsilon}{2} \langle D^2\phi(x)\widetilde{\eta_j} , \widetilde{\eta_j} \rangle-\frac{\varepsilon}{2} d \right|^{4}
 $$
 
 \noindent In the definition of AMVP,  $\nabla \phi (x) \neq 0$  so we  assume it is in the positive x direction, $$\nabla\phi (x) = <|\nabla \phi (x)|, 0 > .$$ With this assumption, the angles may change, taking the form, for some $0 < \eta < \frac{\pi}{3}$ of  $\theta_j = \widetilde{\theta_j} + \eta$
 $$
 \Delta_{\infty}^{G} \phi (x)=\partial_{11}\phi(x)  \text{ and } \Delta_{1}^{G} \phi (x)=\partial_{22}\phi(x) .$$
Then
\vspace{0.1in}

$\langle D^2\phi(x)\widetilde{\eta_j} , \widetilde{\eta_j} \rangle  =  \Delta_{\infty}^{G} \phi(x) \cos ^{2} \theta_{j}+ \Delta_{1}^{G} \phi(x) \sin ^{2} \theta_{j}+ 2 \partial_{12} \phi(x) \sin \theta_{j} \cos \theta_{j} $
 
 \vspace{0.1in}
 and so

\vspace{0.1in}
 $
 \frac{A_{\varepsilon}^{4}[\phi](x) - \phi(x)}{\frac{\varepsilon^2}{2}} = \argmin_{d \in \mathbb{R}} \sum_{j=0}^{5} \bigg \vert ( \langle \nabla \phi(x) , \theta_j \rangle+ \frac{\varepsilon}{2} ( \Delta_{\infty}^{G} \phi(x) \cos ^{2} \theta_{j}$ +
 
 $ \Delta_{1}^{G} \phi(x) \sin ^{2} \theta_{j} + 2 \partial_{12} \phi(x) \sin \theta_{j} \cos \theta_{j})-\frac{\varepsilon}{2} d  \bigg \vert ^{4}
 $

 \vspace{0.1in}

 Using $\sin{2 \theta} = 2 \sin{\theta} \cos{\theta}$, $\Delta^{G}_4 = \frac{1}{4} \Delta^{G}_1 + \frac{3}{4} \Delta^{G}_{\infty}$ and $\sin^2{\theta} = 1 - \cos^2{\theta}$ we get
 
 \vspace{0.2in}
 $\frac{A_{\varepsilon}^{4}[\phi](x) - \phi(x)}{\frac{\varepsilon^2}{2}} =  \argmin_{d \in \mathbb{R}} \sum_{j=0}^{5} \bigg \vert  
 |\nabla \phi(x)|\cos{\theta_j} + \frac{\varepsilon}{2}  \partial_{12} \phi(x)  \sin{2 \theta_j}  +  \frac{\varepsilon}{2} ( \Delta_{\infty}^{G} \phi(x) - \Delta_{1}^{G} \phi(x) )(\cos ^{2} \theta_{j} - \frac{3}{4} ) + \frac{\varepsilon}{2} \Delta_{4}^{G} \phi(x) -\frac{\varepsilon d}{2}  \bigg \vert ^4$

 \vspace{0.1in}
 substituting $t = d - \Delta_{4}^{G} \phi(x) $ and using the affine invariance we get 
 
 \vspace{0.1in}

 $\frac{A_{\varepsilon}^{4}[\phi](x) - \phi(x)}{\frac{\varepsilon^2}{2}} =  \Delta_{4}^{G} \phi(x)  + \argmin_{t \in \mathbb{R}} \sum_{j=0}^{5} \bigg \vert  
 |\nabla \phi(x)|\cos{\theta_j} + \frac{\varepsilon}{2}  \partial_{12} \phi(x) \sin{2 \theta_j}  +  \frac{\varepsilon}{2} ( \Delta_{\infty}^{G} \phi(x) - \Delta_{1}^{G} \phi(x) )(\cos ^{2} \theta_{j} - \frac{3}{4} )  -\frac{\varepsilon t}{2}  \bigg \vert ^4$
 
 \vspace{0.1in}
 call $a =  \Delta_{\infty}^{G} \phi(x) - \Delta_{1}^{G} \phi(x) $ and $b =  \partial_{12} \phi(x)$ this becomes 
 
 \vspace{0.2in}

 $\frac{A_{\varepsilon}^{4}[\phi](x) - \phi(x)}{\frac{\varepsilon^2}{2}} =  \Delta_{4}^{G} \phi(x)$ +

 $$\argmin_{t \in \mathbb{R}} \sum_{j=0}^{5} \bigg \vert  
 |\nabla \phi(x)|\cos{\theta_j} + \frac{\varepsilon}{2}  b  \sin{2 \theta_j}  +  \frac{\varepsilon}{2} a (\cos ^{2} \theta_{j} - \frac{3}{4} )  -\frac{\varepsilon t}{2}  \bigg \vert ^4$$

For $\varepsilon > 0$ fixed,  the $\argmin$ occurs at the unique place when the derivative is zero, that is the unique value $t = t(\varepsilon)$ with 

$$
0 = \frac{0}{4 } =  \sum_{j=0}^{5} \bigg( |\nabla \phi(x)|\cos{\theta_j} + \frac{\varepsilon}{2}  b  \sin{2 \theta_j}  +  \frac{\varepsilon}{2} a (\cos ^{2} \theta_{j} - \frac{3}{4} )  -\frac{\varepsilon t}{2}  \bigg)^3 
$$
 expanding this out, and keeping only the terms up to order $\varepsilon$ we get 
 
 $$
 0 = \sum_{j=0}^{5} |\nabla \phi(x)|^ 3\cos^3{\theta_j} + 3|\nabla \phi(x)|^2 \cos^2{\theta_j} \cdot \frac{\varepsilon}{2} \big( b  \sin{2 \theta_j} + a (\cos ^{2} \theta_{j} - \frac{3}{4} ) -t \big) 
 $$
 Recall that for $j=0  \ldots 5 ,$ we have $\theta_j= \widetilde{\theta_j} + \eta = \frac{j \pi}{3} + \eta$  where for this set $\cos{ \widetilde{\theta_j}}=\{1,\frac{1}{2},-\frac{1}{2},-1,-\frac{1}{2},\frac{1}{2}\}$ and $\sin{ \widetilde{\theta_j}}=\{0,\frac{\sqrt{3}}{2},\frac{\sqrt{3}}{2} ,0,-\frac{\sqrt{3}}{2}, -\frac{\sqrt{3}}{2}\} $. \\
 
 \noindent
 Basic trigonometric identities  show for any $\eta$, 
 
\begin{align*}
   & \sum \cos^3(\theta_j) \sin(\theta_j) = 0 \\
   &\sum \cos^4(\theta_j) = \frac{18}{8} = \frac{3}{4} \cdot \frac{6}{2} = \frac{3}{4} \sum \cos^2{\theta_j}\\ &\text{ and } \sum \cos^3(\theta_j)=0 
\end{align*}which implies that up to $o(\varepsilon)$,  $t=0$ and hence $A^4_{\varepsilon} $ has the asymptotic mean value property.  

 \end{proof}  
  
  \vspace{0.2in}
 One of the interesting interpretations of this theorem is that for a fine equilateral triangular mesh covering a region $\Omega \subset \mathbb{R}^2$, the $4-Laplace$ equation can be solved numerically with a fixed directional resolution. This is because the triangular lattice is a tessellation of $\mathbb{R}^2$.  The triangular lattice can be generated algebraically as the Eisenstein integers $$z = a+b\omega \text{ where a and b are integers and } \omega = e^{i\frac{2\pi}{3}}.$$
  
A more complicated construction will be discussed for the 24-cell honeycomb in 4-dimensions. We will extend this discrete averaging over polygons to even values of p in two dimensions and to some higher dimensional analogs called polytopes P, in the spirit of \cite{Japanese}.  In that article, they let $P(k)$ be the $k$-dimensional skeleton for P and  $\mathcal{H}_{P(k)}$ denote the set of $P(k)$-harmonic functions. For certain polytopes, there are polynomial invariants for the finite reflection groups of the types $H_{3}, H_{4}$ and $F_{4}$ which explicitly allows one to determine the solution space of functions satisfying a 2-average mean value property (MVP) related to the icosahedron and dodecahedron in three dimensions and the 24-cell, 600-cell, and 120-cell in four dimensions.  We extend the previous theorem in the plane to all even positive integers.

  \begin{theorem}\label{thm:Theorem1}
$ A_{\varepsilon}^{2  k}$ has the asymptotic mean value property for the $p=2 k$-Laplacian  for $k \in \mathbb{N}$, and $c_{2k,2} = \frac{1}{2}.$ 
\end{theorem}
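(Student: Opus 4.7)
The plan is to follow the argument of Theorem \ref{thm:Theorem2.6} essentially verbatim, with the Walsh mean value identity (Theorem \ref{thm:complexpoly}) supplying the trigonometric cancellations at each step. First I would use the affine invariance of $A^{2k}_\varepsilon$ to rewrite
$$\frac{A_\varepsilon^{2k}[\phi](x)-\phi(x)}{\varepsilon^2/2}=\argmin_{d\in\mathbb{R}}\sum_{j=0}^{2k+1}\Bigl|\phi(x+\xi_j)-\phi(x)-\tfrac{\varepsilon^2}{2}d\Bigr|^{2k},$$
Taylor expand $\phi(x+\xi_j)$ to second order about $x$, and rotate so that $\nabla\phi(x)$ points along the positive $x$-axis. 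The angles become $\theta_j=\tfrac{\pi j}{k+1}+\eta$ for $j=0,\dots,2k+1$, and the quadratic form $\langle D^2\phi(x)\widetilde{\eta_j},\widetilde{\eta_j}\rangle$ decomposes as $\Delta^G_\infty\phi(x)\cos^2\theta_j+\Delta^G_1\phi(x)\sin^2\theta_j+2\partial_{12}\phi(x)\sin\theta_j\cos\theta_j$, exactly as in the $p=4$ case.

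Next I would use $\Delta^G_{2k}\phi=\tfrac{2k-1}{2k}\Delta^G_\infty\phi+\tfrac{1}{2k}\Delta^G_1\phi$, which follows from \eqref{eqn2.9}, to re-center the argmin. Setting $a=\Delta^G_\infty\phi(x)-\Delta^G_1\phi(x)$, $b=\partial_{12}\phi(x)$, and substituting $t=d-\Delta^G_{2k}\phi(x)$, the bracketed expression becomes
$$|\nabla\phi(x)|\cos\theta_j+\tfrac{\varepsilon}{2}b\sin 2\theta_j+\tfrac{\varepsilon}{2}a\Bigl(\cos^2\theta_j-\tfrac{2k-1}{2k}\Bigr)-\tfrac{\varepsilon t}{2}.$$
Setting the derivative of the sum of its $(2k)$-th powers to zero and retaining only terms up to order $\varepsilon$, the proof reduces to verifying three trigonometric sums.

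The three required identities are $\sum_j\cos^{2k-1}\theta_j=0$, $\sum_j\cos^{2k-2}\theta_j\sin 2\theta_j=0$, and $\sum_j\cos^{2k}\theta_j=\tfrac{2k-1}{2k}\sum_j\cos^{2k-2}\theta_j$. All three follow from one observation: expanding $\cos^n\theta$ (or $\cos^{2k-2}\theta\sin 2\theta$) as a real linear combination of $e^{i\ell\theta}$, the geometric series $\sum_{j=0}^{2k+1}e^{i\ell\theta_j}=e^{i\ell\eta}\sum_j e^{i\pi\ell j/(k+1)}$ vanishes unless $\ell$ is a multiple of $2(k+1)$, in which case it equals $(2k+2)e^{i\ell\eta}$. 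For $n\in\{2k-1,2k-2,2k\}$ every frequency satisfies $|\ell|<2(k+1)$, so only $\ell=0$ can contribute; the constant Fourier coefficient is absent for odd-power cosines and for the odd function $\cos^{2k-2}\theta\sin 2\theta$, while for the even cases it equals $\binom{2k}{k}/2^{2k}$ and $\binom{2k-2}{k-1}/2^{2k-2}$. The binomial identity $\binom{2k}{k}/\binom{2k-2}{k-1}=2(2k-1)/k$ then yields the ratio $\tfrac{2k-1}{2k}$, exactly matching the offset I had inserted in the $a$-term, so $t=o(1)$ and $c_{2k,2}=\tfrac{1}{2}$ is forced.

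The main obstacle is bookkeeping rather than any deep step: the critical point equation is a sum of $(2k-1)$-th powers, and its multinomial expansion produces many cross terms, but everything beyond what is displayed above contributes at order $\varepsilon^2$ or higher and drops out of the AMVP limit by the continuity estimate for $A^{2k}_\varepsilon$. Once the Fourier/geometric-series perspective is adopted, the entire family of cancellations collapses to the single binomial ratio identity above, making the $p=4$ proof and the $p=2k$ proof structurally indistinguishable.
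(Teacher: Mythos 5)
Your proposal is correct and takes essentially the same route as the paper: both arguments reduce to the single identity $\sum_{j}\cos^{2k}\theta_j=\tfrac{2k-1}{2k}\sum_{j}\cos^{2k-2}\theta_j$ over the $2k+2$ equally spaced angles (together with the vanishing of the odd-frequency sums), which the paper proves in its appendix by exactly the Fourier/geometric-series mechanism you describe, your binomial-coefficient ratio being the same constant as in equation (3.9). The only cosmetic difference is that you re-center the argmin by $\Delta_{2k}^{G}\phi$ as in the $p=4$ proof, whereas the paper's general proof solves the critical-point equation for $d$ directly and identifies the resulting ratio of trigonometric sums with $\Delta_{2k}^{G}\phi$ at the end.
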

\begin{proof}
			We take a slightly different approach and follow \cite{Ishiwata} where the authors note in their Theorem 3.2 that,  by continuity of $p-averages$, one only needs to show the theorem for quadratic functions of the form 
			\begin{equation}
			 \phi(y)=\phi(x)+\langle \vec{a} ,(y-x) \rangle+\frac{1}{2}\langle A(y-x), y-x\rangle, y \in B_{\varepsilon}(x)   
			\end{equation}
			
			Where $A=D^2\phi(x)$ and $\vec{a} = \nabla \phi(x)$.
		    As before, by affine invariance
			
			$$ A_{\varepsilon}^{2k}[\phi](x) - \phi(x) = \argmin_{c \in \mathbb{R}} \sum_{j=0}^{2k+1}\left|(\phi(x + \xi_j)-\phi(x))-c\right|^{2k}$$
			
			\vspace{0.1in}
			and

			$$ \frac{A_{\varepsilon}^{2k}[\phi](x) - \phi(x)}{\frac{\varepsilon^2}{2}}= \argmin_{d \in \mathbb{R}} \sum_{j=0}^{2k+1}\left|(\phi(x + \xi_j)-\phi(x))-\frac{\varepsilon^2}{2} d\right|^{2k}$$

			Using the form of $\phi$
			$$
			\frac{A_{\varepsilon}^{2k}[\phi](x) - \phi(x)}{\frac{\varepsilon^2}{2}} = \argmin_{d \in \mathbb{R}} \sum_{j=0}^{2k+1}\left|    \langle \varepsilon \nabla \phi(x) , \widetilde{\eta_j}\rangle + \frac{\varepsilon^2}{2} \langle D^2\phi(x)\widetilde{\eta_j} , \widetilde{\eta_j} \rangle    -\frac{\varepsilon^2}{2} d  \right|^{2k}
			$$
			and factoring out an $\varepsilon$ from the $\argmin$,
			
			$$
			\frac{A_{\varepsilon}^{2k}[\phi](x) - \phi(x)}{\frac{\varepsilon^2}{2}} = \argmin_{d \in \mathbb{R}} \sum_{j=0}^{2k+1}\left|\langle \nabla \phi(x) , \widetilde{\eta_j} \rangle + \frac{\varepsilon}{2} \langle D^2\phi(x)\widetilde{\eta_j} , \widetilde{\eta_j} \rangle-\frac{\varepsilon}{2} d  \right|^{2k}.
			$$
			
			Substituting $\vec{a}$ and $A$ from before 
			
				$$
			\frac{A_{\varepsilon}^{2k}[\phi](x) - \phi(x)}{\frac{\varepsilon^2}{2}} = \argmin_{d \in \mathbb{R}} \sum_{j=0}^{2k+1}\left| \langle \vec{a} , \widetilde{\eta_j} \rangle + \frac{\varepsilon}{2} \langle A \widetilde{\eta_j} , \widetilde{\eta_j} \rangle -\frac{\varepsilon}{2} d  \right|^{2k}
			$$
			Next we choose a rotation, orthogonal matrix $Q$ that makes $\vec{a}=Q\left(|a| \vec{e}_{1}\right)$, and define directions $\eta_j$ via $Q \eta_j = \widetilde{\eta_j}$ so that 
			
	$$\frac{A_{\varepsilon}^{2k}[\phi](x) - \phi(x)}{\frac{\varepsilon^2}{2}} = \argmin_{d \in \mathbb{R}} \sum_{j=0}^{2k+1}\left|\langle |\vec{a}| Q(\vec{e}_1) , Q({\eta_j})\rangle + \frac{\varepsilon}{2} \langle A Q({\eta_j}) , Q({\eta_j}) \rangle -\frac{\varepsilon}{2} d  \right|^{2k}
			$$
			
			$$
			= \argmin_{d \in \mathbb{R}} \sum_{j=0}^{2k+1}\left|\langle |\vec{a}| \vec{e}_1 , {\eta_j} \rangle + \frac{\varepsilon}{2} \langle Q^{T} A Q({\eta_j}) , {\eta_j} \rangle-\frac{\varepsilon}{2} d  \right|^{2k}
			$$
			and taking derivatives we see that $d = d(\epsilon)$ satisfies
			
			$$
0=\sum_{j=0}^{2k+1}\left(\langle|\vec{a}| \vec{e}_1 , {\eta_j}\rangle + \frac{\varepsilon}{2} \langle Q^{T} A Q({\eta_j}) , {\eta_j}\rangle -\frac{\varepsilon}{2} d \right)^{2 k-1}.
$$
			
			Expanding out we get
$$
0=\sum_{j=0}^{2k+1} (\langle |\vec{a}| \vec{e}_1 , {\eta_j}\rangle)^{2k-1}+(2k-1)\left( \frac{\varepsilon}{2} \langle Q^{T} A Q({\eta_j}) , {\eta_j}\rangle -\frac{\varepsilon}{2} d \right)  \left(\langle|\vec{a}| \vec{e}_1 , {\eta_j} \rangle \right)^{2k-2} + o(\epsilon)
$$				
			
			Next we notice that if $\eta_j$ is one of the vectors then so too is $-\eta_j$ which means the leading term in the summation disappears.

		Solving for $d$ and letting $\epsilon \rightarrow 0$ we see that

		$$
d(\varepsilon) \rightarrow d= \frac{\sum_{j=0}^{2k+1}\left\langle \vec{e_{1}}, \eta_{j}\right\rangle^{2 k-2}\left\langle Q^{\top} A Q \eta_{j}, \eta_{j}\right\rangle}{\sum_{j=0}^{2k+1}\left\langle \vec{e_{1}}, \eta_{j}\right\rangle^{2 k-2}}
$$
		
	We set $U= Q^{\top} A Q$ and denote the coordinates of $\eta_j = (\eta^1_j,\eta^2_j)$ to simplify 
			
		 	$$
 d= \frac{\sum_{j=0}^{2k+1}(\eta^1_j)^{2 k-2}\left\langle U \eta_{j}, \eta_{j}\right\rangle}{\sum_{j=0}^{2k+1}(\eta^1_j)^{2 k-2}} = \frac{\sum_{j=0}^{2k+1}(\eta^1_j)^{2 k-2}(u_{11}(\eta_j^1)^2 +2u_{12}\eta_j^1 \eta_j^2 + u_{22}(\eta_j^2)^2) }{\sum_{j=0}^{2k+1}(\eta^1_j)^{2 k-2}}
$$
	Where the $u_{ij}$ are the entries in the symmetric matrix $U$. Recall $\eta_j = (cos(\theta_j),sin(\theta_j))$ where the $2k+2$ angles are uniformly distributed around the circle starting at some $\eta$. 

	Written in terms of trigonometric functions 
	
		$$
 d = \frac{\sum_{j=0}^{2k+1} \cos(\theta_j)^{2 k-2}(u_{11}\cos(\theta_j)^2 +2u_{12}\cos(\theta_j) \sin(\theta_j) + u_{22}\sin(\theta_j)^2) }{\sum_{j=0}^{2k+1}( \cos(\theta_j))^{2 k-2}}
$$
		and multiplying through 
		
		$$
 d = \frac{\sum_{j=0}^{2k+1} u_{11} \cos^{2k}(\theta_j) +2u_{12}\cos^{2k-1}(\theta_j) \sin(\theta_j) + u_{22}(1-\cos^2(\theta_j)))\cos^{2k-2}(\theta_j) }{\sum_{j=0}^{2k+1}( \cos(\theta_j))^{2 k-2}}
$$
			
	The term with $u_{12}$ is an odd function over a symmetric set of angles and becomes 0. Regrouping together the other terms

		$$
 d = \frac{\sum_{j=0}^{2k+1}  \cos^{2k-2}(\theta_j)  - \cos^{2k}(\theta_j) }{\sum_{j=0}^{2k+1}( \cos(\theta_j))^{2 k-2}}(u_{11}+u_{22}) +  \frac{\sum_{j=0}^{2k+1}  2\cos^{2k}(\theta_j)  - \cos^{2k-2}(\theta_j) }{\sum_{j=0}^{2k+1}( \cos(\theta_j))^{2 k-2}}(u_{11})
$$
			
Now $u_{11}+u_{22} = tr(U) = tr(A)$ while $u_{11} = \left\langle A \frac{\vec{a}}{|\vec{a}|}, \frac{\vec{a}}{|\vec{a}|}\right\rangle$.  
A trigonometric identity for angles in arithmetic progression (see appendix)  implies
		  \begin{equation}
		  \sum \cos^{2k}(\theta_j) =\frac{2k-1}{2k}\sum \cos^{2k-2}{\theta_j}
		 \end{equation}

which yields

			\begin{equation}\label{eq:equation2.2}
 d = (1-\frac{2k-1}{2k})tr(A) +  (2(\frac{2k-1}{2k})-1)\left\langle A \frac{\vec{a}}{|\vec{a}|}, \frac{\vec{a}}{|\vec{a}|}\right\rangle 
            \end{equation}

Recalling the formal representation p-Laplace operator from equation (2.9) and  
\begin{equation}\label{eq:}
\Delta_{p}^{G} u=\frac{1}{p} \Delta_{2} u+\frac{p-2}{p}|\nabla u|^{-2} \sum_{i, j} \frac{\partial u}{\partial x_{i}} \frac{\partial u}{\partial x_{j}} \frac{\partial^{2} u}{\partial x_{i} \partial x_{j}}
\end{equation}
shows $d = \Delta_{p}^{G} \phi $ as desired. 
	\end{proof}

  \vspace{0.2in}  
 
\subsection{Special p=4 case in three dimensions}\label{sec:p=4}

We begin to study the 3 dimensional case over the icosahedron, will show how the argument for \cref{thm:Theorem2.6} generalizes. From now on, our families of p-averages will be over a discrete set of vectors, scaled by $\varepsilon$.  

\begin{theorem}\label{thm:th1}
In three dimensions, $ A_{\varepsilon}^{4}$ has the asymptotic mean value property for the $4$-Laplacian  where the set of vertices $J$ is given by the Icosahedron. 
\end{theorem}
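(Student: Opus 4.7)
The plan is to adapt the argument of \cref{thm:Theorem2.6}, replacing the six equally-spaced planar vertices by the twelve unit vectors $J=\{v_1,\dots,v_{12}\}\subset S^2$ that point to the vertices of the icosahedron. Writing $\xi_j=\varepsilon v_j$ and using affine invariance together with the reduction (via continuity of the $p$-average) to a purely quadratic test function, the problem reduces to analyzing the critical point $d=d(\varepsilon)$ of
\begin{equation*}
\argmin_{d\in\mathbb{R}} \sum_{j=1}^{12} \Big|\,\varepsilon\langle \nabla\phi(x),v_j\rangle+\tfrac{\varepsilon^2}{2}\langle D^2\phi(x)v_j,v_j\rangle-\tfrac{\varepsilon^2}{2}d\,\Big|^4,
\end{equation*}
which by differentiation satisfies $\sum_{j}(\cdots)^3=0$.

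First I would exploit the central symmetry $J=-J$ of the icosahedron to kill the leading cubic $\sum_j\langle \nabla\phi,v_j\rangle^3=0$, exactly as antipodal symmetry did for the hexagon. Keeping all terms through order $\varepsilon^4$, the Euler equation rearranges to
\begin{equation*}
d(\varepsilon)=\frac{\sum_{j=1}^{12}\langle \nabla\phi,v_j\rangle^2\,\langle D^2\phi\,v_j,v_j\rangle}{\sum_{j=1}^{12}\langle \nabla\phi,v_j\rangle^2}+o(1).
\end{equation*}

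The key new ingredient is the classical fact (of exactly the invariant-theoretic flavor used for $H_3$ in \cite{Japanese}) that the icosahedron is a spherical $5$-design: for every polynomial $P$ of degree at most $5$,
\begin{equation*}
\frac{1}{12}\sum_{j=1}^{12} P(v_j)=\frac{1}{4\pi}\int_{S^2} P(v)\,dS(v).
\end{equation*}
Equivalently, the rank-$r$ moment tensors $\sum_{j}v_j^{\otimes r}$ are rotation-invariant for $r\le 5$. Choosing coordinates so that $\nabla\phi=|\nabla\phi|\,e_1$ and invoking the standard formulas $\int_{S^2} v_iv_j\,dS=\tfrac{4\pi}{3}\delta_{ij}$ and $\int_{S^2} v_iv_jv_kv_l\,dS=\tfrac{4\pi}{15}(\delta_{ij}\delta_{kl}+\delta_{ik}\delta_{jl}+\delta_{il}\delta_{jk})$, the denominator evaluates to $4|\nabla\phi|^2$ and the numerator to $\tfrac{4}{5}|\nabla\phi|^2\bigl(\operatorname{tr}(D^2\phi)+2(D^2\phi)_{11}\bigr)$.

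Dividing and comparing with $\Delta_4^G\phi(x)=\tfrac14\operatorname{tr}(D^2\phi)+\tfrac12(D^2\phi)_{11}$ yields $d(\varepsilon)\to \tfrac{4}{5}\Delta_4^G\phi(x)$, which is the AMVP with constant $c_{4,3}=\tfrac{2}{5}$, matching the continuous spherical formula $c_{p,n}=\tfrac{p}{2(n+p-2)}$ of~(3.4). The main obstacle is precisely the spherical design property at the required order; it is the substitute for the finite trigonometric identity that drove the planar proof, and it is the structural reason why the icosahedron---unlike a generic discrete subset of $S^2$---supports a discrete AMVP for $p=4$.
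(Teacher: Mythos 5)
Your argument is correct and arrives at the paper's constant $c_{4,3}=\tfrac{2}{5}$, but it substitutes a different key lemma for the computational core of the paper's proof. The paper follows exactly your reduction up to the limit formula $d=\big(\sum_{j}\langle \vec u,\eta_j\rangle^{2}\langle A\eta_j,\eta_j\rangle\big)\big/\big(\sum_{j}\langle \vec u,\eta_j\rangle^{2}\big)$; from there it evaluates both sums by direct computation over the twelve unnormalized vertices $(0,\pm 1,\pm\phi)$ and their permutations, repeatedly using $\phi^{2}=\phi+1$ to collapse the golden-ratio coefficients into $4\phi^{2}\langle A\vec u,\vec u\rangle+2\phi^{2}\operatorname{tr}(A)$ in the numerator, and only normalizes the vectors at the very end to extract $\tfrac{2}{5}$. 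You instead invoke the classical fact that the icosahedral vertex set is a spherical $5$-design (only the degree-$\le 4$ moment identities are actually needed, the odd moments vanishing anyway by antipodal symmetry), which lets you replace the discrete second- and fourth-moment tensors by their rotation-invariant spherical values and read off the denominator $4|\nabla\phi|^{2}$ and numerator $\tfrac{4}{5}|\nabla\phi|^{2}(\operatorname{tr}A+2a_{11})$ from the standard formulas; both routes give $d\to\tfrac{4}{5}\Delta_{4}^{G}\phi$. What your approach buys is conceptual clarity and transferability: it explains structurally why the icosahedron (and the dodecahedron, $24$-cell, $600$-cell and $120$-cell treated later) succeed where a generic finite subset of the sphere would fail, and it would dispatch those further examples with no new computation. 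What it costs is reliance on the design property itself, a genuine Delsarte--Goethals--Seidel-type fact that needs its own citation or proof; the paper's hands-on golden-ratio computation is self-contained and amounts to a direct verification of precisely the degree-$2$ and degree-$4$ moment identities you assume.
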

\begin{proof}

 $J=\{\eta_1,\eta_2,\cdots , \eta_{12}\} = \{ (0,\pm1, \pm \phi),(\pm 1,\pm \phi,0),(\pm \phi,0,\pm 1) \} $ where $\phi$ is the golden ratio, that is $\phi = \frac{1 + \sqrt{5}}{2}$, the positive solution to $x^2-x-1=0$. With this set of vertices

$$ \frac{A_{\varepsilon}^{4}[\phi](x) - \phi(x)}{\frac{\varepsilon^2}{2}}= \argmin_{d \in \mathbb{R}} \sum_{j \in J}\left|(\phi(x + \varepsilon \eta_j)-\phi(x))-\frac{\varepsilon^2}{2} d\right|^{4}.$$

By continuity we will only consider quadratic functions 
$$ \phi(y)=\phi(x)+\langle \vec{a} , (y-x)\rangle+\frac{1}{2}\langle A(y-x), y-x\rangle  $$ with  $\vec{u}=\nabla \phi (x) \neq 0$ and $A=\nabla^{2} \phi = D^2 \phi$ so that repeating previous steps 
			
				$$
			\frac{A_{\varepsilon}^{4}[\phi](x) - \phi(x)}{\frac{\varepsilon^2}{2}} = \argmin_{d \in \mathbb{R}} \sum_{j \in J}\left|(\langle \vec{u} , \eta_j\rangle + \frac{\varepsilon}{2} \langle A \eta_j , \eta_j\rangle-\frac{\varepsilon}{2} d  \right|^{4}.
			$$

\vspace{0.2in}
\noindent			
			Taking derivatives we see that $d = d(\epsilon)$ satisfies
			
			$$
0=\sum_{j \in J}\left(\langle \vec{u} , \eta_j \rangle+ \frac{\varepsilon}{2} \langle A \eta_j , \eta_j\rangle -\frac{\varepsilon}{2} d  \right)^{3}.
$$
			
\noindent
Expanding out and switching notation for inner product, we get

			$$
0=\sum_{j \in J}\left(\left<\vec{u},\eta_j\right>^{3} +3 \left(\frac{\varepsilon}{2}\left<\vec{u},\eta_j\right>^2\left<A\eta_j,\eta_j\right>-\frac{\varepsilon}{2} d \right)\left<\vec{u},\eta_j\right>^{2}\right) + o(\epsilon)
$$	
			If $\eta_j$ is a vector in $J$ then so too is $-\eta_j$ and hence the leading term in the summation disappears.  Solving for $d$ and letting $\epsilon \rightarrow 0$ we see that

		$$
d(\varepsilon) \rightarrow d=\frac{\sum_{j=1}^{12}\left<\vec{u},\eta_j\right>^2\left<A\eta_j,\eta_j\right>}{\sum_{j=1}^{j=12} \left<\vec{u},\eta_j\right>^2}
$$
		
By scaling in both the numerator and denominator we can assume  $\vec{u}=\begin{pmatrix}
u_1\\u_2\\ u_3
\end{pmatrix}$ is a unit vector and $A_{3x3}= \begin{pmatrix}
a_{11} &a_{12}&a_{13}\\a_{21} &a_{22}&a_{23}\\ a_{31} &a_{32}&a_{33}
\end{pmatrix}$ a symmetric matrix.\\

Let\\
$$\eta_1=\begin{pmatrix}
1\\\phi\\ 0
\end{pmatrix}, \eta_2=\begin{pmatrix}
0\\1\\ \phi
\end{pmatrix}, \eta_3=\begin{pmatrix}
\phi\\0\\ 1
\end{pmatrix}, \eta_4=\begin{pmatrix}
1\\-\phi\\ 0
\end{pmatrix}, \eta_5=\begin{pmatrix}
0\\1\\ -\phi
\end{pmatrix}, \eta_6=\begin{pmatrix}
-\phi\\0\\ 1
\end{pmatrix}$$\\
$$\eta_{7}=-\eta_{4},\eta_{8}=-\eta_{5},\eta_{9}=-\eta_{6},\eta_{10}=-\eta_{1},\eta_{11}=-\eta_{2},\eta_{12}=-\eta_{3}.$$

From symmetry and the relationship  $\phi^2 = \phi + 1$ one gets
\begin{flalign*}
 \displaystyle 
 \left<A\eta_1,\eta_1\right>&\\&=\left<\begin{pmatrix}
a_{11} &a_{12}&a_{13}\\a_{21} &a_{22}&a_{23}\\ a_{31} &a_{32}&a_{33}
\end{pmatrix}\begin{pmatrix}
1\\\phi\\ 0
\end{pmatrix},\begin{pmatrix}
1\\\phi\\ 0
\end{pmatrix}\right>=\left<\begin{pmatrix}
a_{11} + \phi a_{12}\\a_{21} +\phi a_{22}\\ a_{31} +\phi a_{32}
\end{pmatrix},\begin{pmatrix}
1\\\phi\\ 0
\end{pmatrix}\right>&\\&=a_{11} + 2\phi a_{12}+\phi a_{22}+a_{22}=\left<A\eta_{10},\eta_{10}\right>
 \end{flalign*}
Similar calculations show 

$\left<A\eta_2,\eta_2\right>=a_{22} + 2\phi a_{23}+\phi a_{33}+a_{33}=\left<A\eta_{11},\eta_{11}\right>$\\

$\left<A\eta_3,\eta_3\right>=a_{11} + 2\phi a_{13}+\phi a_{11}+a_{33}=\left<A\eta_{12},\eta_{12}\right>$\\

 $\left<A\eta_4,\eta_4\right>=a_{11} - 2\phi a_{12}+\phi a_{22}+a_{22}=\left<A\eta_{7},\eta_{7}\right>$\\

 $\left<A\eta_5,\eta_5\right>=a_{22} - 2\phi a_{23}+\phi a_{33}+a_{33}=\left<A\eta_{8},\eta_{8}\right>$\\

$\left<A\eta_6,\eta_6\right>=a_{11} - 2\phi a_{13}+\phi a_{11}+a_{33}=\left<A\eta_{9},\eta_{9}\right>$\\

Also
$$\left<\vec{u},\eta_1\right>^2=u_1^2+2\phi u_1u_2+u_2^2+\phi u_2^2=\left<u,\eta_{10}\right>^2$$

$$\left<\vec{u},\eta_2\right>^2=u_2^2+2\phi u_2u_3+u_3^2+\phi u_3^2=\left<\vec{u},\eta_{11}\right>^2$$

$$\left<\vec{u},\eta_3\right>^2=u_1^2+\phi u_1^2+2\phi u_1u_3+u_3^2=\left<\vec{u},\eta_{12}\right>^2$$

$$\left<\vec{u},\eta_4\right>^2=u_1^2-2\phi u_1u_2+u_2^2+\phi u_2^2=\left<\vec{u},\eta_{7}\right>^2$$

$$\left<\vec{u},\eta_5\right>^2=u_2^2-2\phi u_2u_3+u_3^2+\phi u_3^2=\left<\vec{u},\eta_{8}\right>^2$$

$$\left<\vec{u},\eta_6\right>^2=u_1^2+\phi u_1^2-2\phi u_1u_3+u_3^2=\left<\vec{u},\eta_{9}\right>^2$$

\begin{equation}
  \sum_{j=1}^{j=12} \left<\vec{u},\eta_j\right>^2=2\sum_{j=1}^{j=6} \left<\vec{u},\eta_j\right>^2=2(4+2 \phi)=8+4\phi=4+4\phi^2  
\end{equation}

\begin{flalign*}
 \displaystyle
 &\\&\left<\vec{u},\eta_1\right>^2\left<A\eta_1,\eta_1\right>+\left<\vec{u},\eta_4\right>^2\left<A\eta_4,\eta_4\right>&\\&=2u_1^2a_{11}+8\phi ^2 u_1 u_2 a_{12}+2\phi ^2 a_{22}u_1^2+2\phi^2 u_2^2a_{11}+(6\phi +4)u_2^2a_{22}
 \end{flalign*}

\begin{flalign*}
 \displaystyle
 &\\&\left<\vec{u},\eta_2\right>^2\left<A\eta_2,\eta_2\right>+\left<\vec{u},\eta_5\right>^2\left<A\eta_5,\eta_5\right>&\\&=2u_2^2a_{22}+8\phi ^2 u_2 u_3 a_{23}+2\phi ^2 a_{33}u_2^2+2\phi^2 u_3^2a_{22}+(6\phi +4)u_3^2a_{33}
 \end{flalign*}

\begin{flalign*}
 \displaystyle
 &\\&\left<\vec{u},\eta_3\right>^2\left<A\eta_3,\eta_3\right>+\left<\vec{u},\eta_6\right>^2\left<A\eta_6,\eta_6\right>&\\&=2u_3^2a_{33}+8\phi ^2 u_3 u_1 a_{13}+2\phi ^2 a_{11}u_3^2+2\phi^2 u_1^2a_{33}+(6\phi +4)u_1^2a_{11}
 \end{flalign*}

\begin{flalign*}
  \sum_{j=1}^{6}\left<\vec{u},\eta_j\right>^2\left<A\eta_j,\eta_j\right>&\\&=2\sum_{j=1}^{3}u_j^2a_{jj}+8\phi ^2\sum_{i,j=1,i\neq j}^{3} u_i u_j a_{ij}&\\&+2\phi ^2 \sum_{i,j=1,i\neq j}^{3}a_{ii}u_j^2+(6\phi +4)\sum_{j=1}^{3}u_j^2a_{jj}&\\&=4\phi ^2 \left(\sum_{j=1}^{3}u_j^2a_{jj}+2\sum_{i,j=1,i\neq j}^{3} u_i u_j a_{ij}\right)&\\&+2\phi ^2 \left(\sum_{j=1}^{3}u_j^2a_{jj}+\sum_{i,j=1,i\neq j}^{3}a_{ii}u_j^2\right)&\\&=4\phi ^2 \left<A\vec{u},\vec{u}\right>+2\phi ^2 tr(A)&\\
\end{flalign*}

Thus 
\begin{flalign*}
 d&=\frac{\sum_{j=1}^{12}\left<\vec{u},\eta_j\right>^2\left<A\eta_j,\eta_j\right>}{\sum_{j=1}^{12} \left<\vec{u},\eta_j\right>^2}&\\&=\frac{2\left(4\phi ^2 \left<A\vec{u},\vec{u}\right>+2\phi ^2 tr(A)\right)}{4(1+\phi^2)}&\\&=\frac{4\phi^2}{4(1+\phi^2)}\left(2\left<A\vec{u},\vec{u}\right>+tr(A)\right)&\\&=(\phi^2+1)\frac{4 \phi^2}{(\phi^2+1)^2}\left(\frac{1}{4} \Delta \vec{u} +\frac{4-2}{4}\Delta_\infty\right)&\\&   
 =(\phi^2+1)\frac{4 (\phi +1)}{(\phi+2)^2}\left(  \Delta_4^G \vec{u}  \right)&\\&
 =(\phi^2+1)\frac{4 (\phi +1)}{(5 \phi+5)}\left(  \Delta_4^G \vec{u}  \right)
 &
\end{flalign*}
Similar to the result we have in equation\cref{eq:equation2.2}.  
\end{proof}
If we had used normalized unit vectors in $J$, that is dividing by $\sqrt{1+\phi^2}$ we would have a set with $c_{4,3} = \frac{1}{2}d = \frac{2}{5}$ in agreement with the AMVP for the boundary of the sphere in equation (3.5).

\vspace{0.2in}
We already examined the averaging behavior for an icosahedron in 3 dimensions.  Of course a cube is another polyhedron with averaging behavior.  The set of regular polytopes was investigated for averaging behaviors for harmonic polynomials in \cite{Japanese} which inspired our investigations into higher dimensional, discrete p-averages for p an even integer. Upon investigation of all the previous arguments one sees the common property is a matrix equality which we now define.

\begin{definition} 
		We say that a discrete set of vectors $J = \{\eta_1, \eta_2, \cdots \eta_k \} \subset \mathbb{R}^n$ is a \textbf{ p-averaging set } if there is a $d_{p,n}$ so that for every symmetric matrix $A$, and unit vector $\vec{u}$  we have
		\begin{equation}
		d_{p,n}(\frac{1}{p} tr(A) + \frac{p-2}{p}\left<A \vec{u},\vec{u}\right>)=\frac{\sum_{j \in J} \left<\vec{u},\eta_j\right>^{p-2}\left<A\eta_j,\eta_j\right>}{\sum_{j \in J}  \left<\vec{u},\eta_j\right>^{p-2}} 
		\end{equation}
	\end{definition}

The next example requires extensive computations which are easily done using MATLAB.  This example corresponds to the 20 vertices on the dodecahedron.

\begin{example}\label{example2}
  $J=\{( 0, \pm 1/c, \pm c), (\pm 1/c, \pm c, 0), (\pm c, 0, \pm 1/c),(\pm 1, \pm 1 \pm 1)\}$, the set of vertices of the dodecahedron where $c = \frac{1 + \sqrt{5}}{2}$ is a $4$- \textbf{averaging set}.

\end{example}

It can be shown that for any unit vector $\vec{u}$, one has 
$\sum_{j=1}^{20} \left<\vec{u},\eta_j\right>^2=20(u_1^2+u_2^2+u_3^2)$  \hspace{0.1in} while
$\sum_{j=1}^{20}\left<\vec{u},\eta_j\right>^2\left<A\eta_j,\eta_j\right>=24\left<A \vec{u},\vec{u}\right>+12\text{ tr}(A)(u_1^2+u_2^2+u_3^2)$

Thus

\begin{align*}
&\frac{\sum_{j=1}^{20}\left<\vec{u},\eta_j\right>^2\left<A\eta_j,\eta_j\right>}{\sum_{j=1}^{20} \left<\vec{u},\eta_j\right>^2}&\\\\&=\frac{\left(24\left<A\vec{u},\vec{u}\right>+12\text{ tr}(A)\right)}{20}&\\\\&= \frac{12}{20} \left(2\left<A\vec{u},\vec{u}\right>+\text{ tr}(A)\right)&\\\\&= 3*\frac{4}{5} \left(\frac{2}{4}\left<A\vec{u},\vec{u}\right>+\frac{1}{4}\text{ tr}(A)\right)&\\    
\end{align*}

so that $J$ is a $4-$ \textbf{averaging} set in 3 dimensions. 

\subsection{Four Dimensions}\label{sec:4dim}

In this section we show the AMVP for a set of polytopes in 4 dimensions.  The most interesting of these perhaps is called the 24-cell. One of the most interesting facts about the 24-cell is that it is the basis for a tessellating region in four dimensions, called the honeycomb which would allow numerical examples to be computed.  To date, most numerics have been restricted to two dimensions as the angular variables typically are never fully discretized. The full description of the 24-cell honeycomb is a generalization of the hexagonal tiling of two dimensions and is best described algebraically, a partial geometric rendering is given in figure 1. 

We begin with the description

\begin{theorem}\label{thm:th2}
In four dimensions, $ A_{\varepsilon}^{4}$ has the asymptotic mean value property for the $4$-Laplacian  where the set of vertices $J$ is given by the 24-cell honeycomb, or icositetrachoric honeycomb. 
\end{theorem}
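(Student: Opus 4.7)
The plan is to reduce this to a verification that a particular 24-vector set is a 4-averaging set in the sense of Definition 3.9, using the same reduction scheme that was applied in the proofs of Theorem~\ref{thm:th1} and Theorem~\ref{thm:Theorem2.6}. By continuity of the 4-average one may assume $\phi$ is a quadratic function $\phi(y)=\phi(x)+\langle \vec{a},y-x\rangle+\frac{1}{2}\langle A(y-x),y-x\rangle$, and by affine invariance and rescaling one recasts the AMVP as a condition on $d(\varepsilon):=(2/\varepsilon^{2})\bigl(A_{\varepsilon}^{4}[\phi](x)-\phi(x)\bigr)$. Differentiating the $L^{4}$ minimization condition in the scalar parameter and using the central symmetry $J=-J$ of the 24-cell to annihilate the leading-order cubic term in $\varepsilon$, one arrives in the limit at
$$d=\frac{\sum_{\eta\in J}\langle \vec{u},\eta\rangle^{2}\langle A\eta,\eta\rangle}{\sum_{\eta\in J}\langle \vec{u},\eta\rangle^{2}},\qquad \vec{u}=\nabla\phi(x),$$
and it remains to show that this ratio equals $d_{4,4}\bigl(\frac{1}{4}\mathrm{tr}(A)+\frac{1}{2}\langle A\vec{u},\vec{u}\rangle/|\vec{u}|^{2}\bigr)$ for some dimensional constant $d_{4,4}$.

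I would take $J$ to be the 24 minimal vectors of the $D_{4}$ lattice, that is, all coordinate permutations of $(\pm 1,\pm 1,0,0)$; these are the vertices of a single 24-cell and constitute the 24 nearest-neighbor directions at every vertex of the icositetrachoric honeycomb. The desired identity then reduces to the two tensor statements
$$T_{ijk\ell}:=\sum_{\eta\in J}\eta^{i}\eta^{j}\eta^{k}\eta^{\ell}=\gamma\bigl(\delta_{ij}\delta_{k\ell}+\delta_{ik}\delta_{j\ell}+\delta_{i\ell}\delta_{jk}\bigr),\qquad S_{ij}:=\sum_{\eta\in J}\eta^{i}\eta^{j}=\mu\,\delta_{ij},$$
for positive constants $\gamma,\mu$. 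The structural reason these must hold is that the reflection group $F_{4}$ preserves $J$ and acts irreducibly on $\mathbb{R}^{4}$, and its ring of polynomial invariants has fundamental generators only in degrees $2,6,8,12$. Hence every $F_{4}$-invariant homogeneous polynomial of degree at most $5$ is a polynomial in $|x|^{2}$, which forces the fully symmetric tensor $T$ to be a scalar multiple of the isotropic symmetric 4-tensor and $S$ to be a scalar multiple of the identity. Contracting against $u\otimes u\otimes A$ then produces exactly the required linear combination of $\mathrm{tr}(A)$ and $\langle A\vec{u},\vec{u}\rangle$.

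A direct computation, parallel to the one used for the icosahedron and the dodecahedron, provides both a sanity check and the explicit constants. Partitioning $J$ into the six subsets indexed by coordinate pairs $\{a,b\}$ (each containing four vectors obtained by independent sign choices in positions $a,b$), the contribution of a subset to a monomial $\eta^{i}\eta^{j}\eta^{k}\eta^{\ell}$ vanishes unless every tensor index lies in $\{a,b\}$ and each of $a,b$ occurs an even number of times. Summing over the six pairs yields $T_{iiii}=12$, $T_{iijj}=4$ for $i\neq j$ (in any symmetric ordering), $T=0$ on all other patterns, and $S_{ij}=12\,\delta_{ij}$. Carrying out the contraction gives $d_{4,4}=4/3$ and therefore $c_{4,4}=2/3$.

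The main obstacle is conceptual rather than computational: one needs to recognize that the essential feature forcing the tensor identity is the absence of a fundamental $F_{4}$-invariant in degree $4$, which is precisely the same phenomenon that made the regular hexagon work in Theorem~\ref{thm:Theorem2.6} and the icosahedron work in Theorem~\ref{thm:th1}. Once this is identified, the remainder of the argument is bookkeeping and mirrors the trace-identification at the end of the icosahedron proof. A minor technical point is to reconcile the normalization conventions for the direction vectors with Section~3, since the vectors in the natural 24-cell above have common norm $\sqrt{2}$ rather than $1$; this only rescales the asymptotic constant and does not affect the validity of the AMVP.
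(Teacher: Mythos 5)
Your reduction is the same as the paper's: restrict to quadratics by continuity, use affine invariance to recast the AMVP as a statement about $d(\varepsilon)$, differentiate the $L^4$ minimization condition, kill the cubic leading term via $J=-J$, and land on the ratio $d=\sum\langle\vec u,\eta\rangle^{2}\langle A\eta,\eta\rangle/\sum\langle\vec u,\eta\rangle^{2}$ over the $24$ permutations of $(\pm1,\pm1,0,0)$. Where you diverge is in how that ratio is evaluated. The paper grinds it out: it lists all twelve values $\langle A\eta_j,\eta_j\rangle=a_{ii}\pm2a_{ij}+a_{jj}$ and $\langle\vec u,\eta_j\rangle^{2}=(u_i\pm u_j)^{2}$, pairs $\eta_j$ with its sign-flipped partner so the cross terms assemble into $8u_iu_ja_{ij}$, and sums to get $4\langle A\vec u,\vec u\rangle+2\,\mathrm{tr}(A)$ over $6$. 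You instead package the computation as the moment-tensor identities $T=\gamma(\delta\otimes\delta)_{\mathrm{sym}}$, $S=\mu I$ with $\gamma=4$, $\mu=12$ (your values check out and reproduce the paper's $d=\frac{8}{6}\bigl(\tfrac14\mathrm{tr}(A)+\tfrac12\langle A\vec u,\vec u\rangle\bigr)$, i.e.\ $d_{4,4}=4/3$), and you justify why the identities \emph{must} hold by noting that $W(F_4)$ preserves $J$ and has no fundamental invariant in degree $4$, so the only degree-$4$ invariant is $|x|^{4}$. That invariant-theoretic explanation is genuinely more than the paper offers for this theorem (the paper only gestures at the $H_3,H_4,F_4$ invariants via the cited literature, and its proof is pure enumeration); it buys a uniform explanation covering the hexagon, icosahedron, $24$-cell, $600$-cell and $120$-cell at once, whereas the paper's computation is self-contained and checkable by hand. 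One bookkeeping note: your $c_{4,4}=d_{4,4}/2=2/3$ is the constant for the vertices of norm $\sqrt2$ as listed; after the renormalization you flag at the end, it becomes $c_{4,4}=\tfrac{2}{6}=\tfrac13$, which is what the paper's closing remark records to match its spherical formula $c_{p,n}=p/(2(n+p-2))$.
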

\begin{proof}

In four-dimensional Euclidean geometry, the 24-cell honeycomb, or icositetrachoric honeycomb is a regular space-filling tessellation of 4-dimensional space by regular 24-cells. It is the only tessellating domain besides the obvious 4-cubic honeycomb.  It has the AMVP for p=4.    

The vertices on a 24-cell centered at $(0,0,0,0)$ can be chosen as $$(1,1,0,0),(1,0,1,0),(1,0,0,1),(0,1,1,0),(0,1,0,1),(0,0,1,1)$$ and all changes where the 1s are replaced by $\pm 1$.

\begin{figure}[ht]
    \centering
    \includegraphics[width=0.75\textwidth]{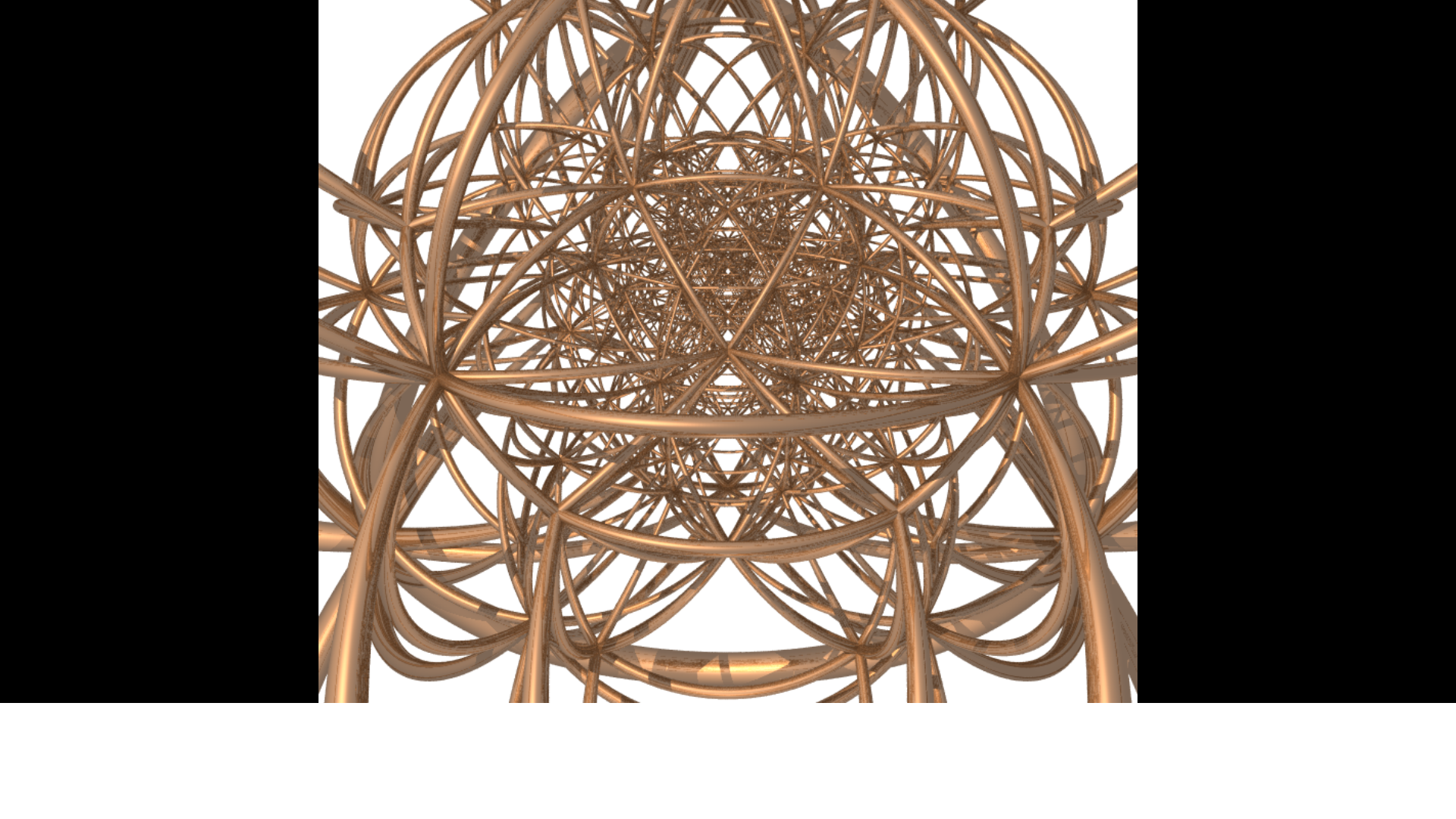}
    \caption{Honeycomb 24-cell and layer of adjacent faces }
    \label{fig:fig6}
\end{figure}

\vspace{1cm}

Let $\vec{u}=\begin{pmatrix}
u_1\\u_2\\ u_3\\ u_4
\end{pmatrix}$ be a unit vector and $A_{4x4}= \begin{pmatrix}
a_{11} &a_{12}&a_{13}&a_{14}\\a_{21} &a_{22}&a_{23}&a_{24}\\ a_{31} &a_{32}&a_{33}&a_{34}\\a_{41} &a_{42}&a_{43}&a_{44}
\end{pmatrix}$ be a symmetric matrix and set
\begin{flalign*}
\eta_1=\begin{pmatrix}
1\\1\\0\\0
\end{pmatrix}
\hspace{0.13in}
\eta_2=\begin{pmatrix}
1\\0\\1\\0
\end{pmatrix}
\hspace{0.13in}
\eta_3=\begin{pmatrix}
1\\0\\0\\1
\end{pmatrix}
\hspace{0.13in}
\eta_4=\begin{pmatrix}
0\\1\\1\\0
\end{pmatrix}
\hspace{0.13in}
\eta_5=\begin{pmatrix}
0\\1\\0\\1
\end{pmatrix}
\hspace{0.13in}
\eta_6=\begin{pmatrix}
0\\0\\1\\1
\end{pmatrix}
\end{flalign*}

\begin{flalign*}
\eta_7=\begin{pmatrix}
1\\-1\\0\\0
\end{pmatrix}
 \eta_8=\begin{pmatrix}
1\\0\\-1\\0
\end{pmatrix} \eta_9=\begin{pmatrix}
1\\0\\0\\-1
\end{pmatrix} \eta_{10}=\begin{pmatrix}
0\\1\\-1\\0
\end{pmatrix} \eta_{11}=\begin{pmatrix}
0\\1\\0\\-1
\end{pmatrix} \eta_{12}=\begin{pmatrix}
0\\0\\1\\-1
\end{pmatrix}
\end{flalign*}

\begin{flalign*}
 \eta_{13}=-\eta_{7},
 \eta_{14}=-\eta_{8}, 
 \eta_{15}=-\eta_{3}, 
 \eta_{16}=-\eta_{10}, \eta_{17}=-\eta_{11}, \eta_{18}=-\eta_{12}, 
\end{flalign*}

$$
\eta_{19}=-\eta_{1}
,\eta_{20}=-\eta_{2},\eta_{21}=-\eta_{3},\eta_{22}=-\eta_{4},\eta_{23}=-\eta_{5},\eta_{24}=-\eta_{6} 
$$

Because of symmetry we have
\begin{flalign*}
\left<A\eta_1,\eta_1\right>&\\&=\left<\begin{pmatrix}
a_{11} &a_{12}&a_{13}&a_{14}\\a_{21} &a_{22}&a_{23}&a_{24}\\ a_{31} &a_{32}&a_{33}&a_{34}\\a_{41} &a_{42}&a_{43}&a_{44}
\end{pmatrix}\begin{pmatrix}
1\\1\\ 0\\0
\end{pmatrix},\begin{pmatrix}
1\\1\\ 0\\0
\end{pmatrix}\right>=\left<\begin{pmatrix}
a_{11} + a_{12}\\a_{21} +a_{22}\\ a_{31} +a_{32}\\a_{41} +a_{42}
\end{pmatrix},\begin{pmatrix}
1\\1\\ 0\\0
\end{pmatrix}\right>&\\&=a_{11} + 2 a_{12}+a_{22}=\left<A\eta_{19},\eta_{19}\right>
\end{flalign*}

Similarly

$\left<A\eta_2,\eta_2\right>=a_{11} + 2 a_{13}+a_{33}=\left<A\eta_{20},\eta_{20}\right>$

$\left<A\eta_3,\eta_3\right>=a_{11} + 2 a_{14}+a_{44}=\left<A\eta_{21},\eta_{21}\right>$

$\left<A\eta_4,\eta_4\right>=a_{22} + 2 a_{23}+a_{33}=\left<A\eta_{22},\eta_{22}\right>$

$\left<A\eta_5,\eta_5\right>=a_{22} + 2 a_{24}+a_{44}=\left<A\eta_{23},\eta_{23}\right>$

$\left<A\eta_6,\eta_6\right>=a_{33} + 2 a_{34}+a_{44}=\left<A\eta_{24},\eta_{24}\right>$

$\left<A\eta_7,\eta_7\right>=a_{11} - 2 a_{12}+a_{22}=\left<A\eta_{13},\eta_{13}\right>$

$\left<A\eta_8,\eta_8\right>=a_{11} - 2 a_{13}+a_{33}=\left<A\eta_{14},\eta_{14}\right>$

$\left<A\eta_9,\eta_9\right>=a_{11} - 2 a_{14}+a_{44}=\left<A\eta_{15},\eta_{15}\right>$

$\left<A\eta_{10},\eta_{10}\right>=a_{22} - 2 a_{23}+a_{33}=\left<A\eta_{16},\eta_{16}\right>$

$\left<A\eta_{11},\eta_{11}\right>=a_{22} - 2 a_{24}+a_{44}=\left<A\eta_{17},\eta_{17}\right>$

$\left<A\eta_{12},\eta_{12}\right>=a_{33} - 2 a_{34}+a_{44}=\left<A\eta_{18},\eta_{18}\right>$

\vspace{0.5cm}

Now we have,

$$\left<\vec{u},\eta_1\right>^2=\left<\begin{pmatrix}
u_1\\u_2\\ u_3\\u_4
\end{pmatrix},\begin{pmatrix}
1\\1\\0\\0
\end{pmatrix}\right>^2=(u_1+u_2)^2=u_1^2+2 u_1u_2+u_2^2=\left<\vec{u},\eta_{14}\right>^2$$

Similarly we have:\\

$\left<\vec{u},\eta_2\right>^2=u_1^2+2 u_1u_3+u_3^2=\left<\vec{u},\eta_{20}\right>^2$\\

$\left<\vec{u},\eta_3\right>^2=u_1^2+2 u_1u_4+u_4^2=\left<\vec{u},\eta_{21}\right>^2$\\

$\left<\vec{u},\eta_4\right>^2=u_2^2+2 u_2u_3+u_3^2=\left<u,\eta_{22}\right>^2$\\

$\left<\vec{u},\eta_5\right>^2=u_2^2+2 u_2u_4+u_4^2=\left<\vec{u},\eta_{23}\right>^2$\\

$\left<\vec{u},\eta_6\right>^2=u_3^2+2 u_3u_4+u_4^2=\left<\vec{u},\eta_{24}\right>^2$\\

$\left<\vec{u},\eta_7\right>^2=u_1^2-2 u_1u_2+u_2^2=\left<\vec{u},\eta_{13}\right>^2$\\

$\left<\vec{u},\eta_8\right>^2=u_1^2-2 u_1u_3+u_3^2=\left<\vec{u},\eta_{14}\right>^2$\\

$\left<\vec{u},\eta_9\right>^2=u_1^2-2 u_1u_4+u_4^2=\left<\vec{u},\eta_{15}\right>^2$\\

$\left<\vec{u},\eta_{10}\right>^2=u_2^2-2 u_2u_3+u_3^2=\left<\vec{u},\eta_{16}\right>^2$\\

$\left<\vec{u},\eta_{11}\right>^2=u_2^2-2 u_2u_4+u_4^2=\left<\vec{u},\eta_{17}\right>^2$\\

$\left<\vec{u},\eta_{12}\right>^2=u_3^2-2 u_3u_4+u_4^2=\left<\vec{u},\eta_{18}\right>^2$\\

\begin{equation}
  \sum_{j=1}^{j=12} \left<\vec{u},\eta_j\right>^2=6  
\end{equation}

$$\left<\vec{u},\eta_1\right>^2\left<A\eta_1,\eta_1\right>+\left<\vec{u},\eta_7\right>^2\left<A\eta_7,\eta_7\right>=2u_1^2(a_{11}+a_{22})+2u_2^2(a_{11}+a_{22})+8u_1u_2a_{12}$$

$$\left<\vec{u},\eta_2\right>^2\left<A\eta_2,\eta_2\right>+\left<\vec{u},\eta_8\right>^2\left<A\eta_8,\eta_8\right>=2u_1^2(a_{11}+a_{33})+2u_3^2(a_{11}+a_{33})+8u_1u_3a_{13}$$

$$\left<\vec{u},\eta_3\right>^2\left<A\eta_3,\eta_3\right>+\left<\vec{u},\eta_9\right>^2\left<A\eta_9,\eta_9\right>=2u_1^2(a_{11}+a_{44})+2u_4^2(a_{11}+a_{44})+8u_1u_4a_{14}$$

$$\left<\vec{u},\eta_4\right>^2\left<A\eta_4,\eta_4\right>+\left<\vec{u},\eta_{10}\right>^2\left<A\eta_{10},\eta_{10}\right>=2u_2^2(a_{22}+a_{33})+2u_3^2(a_{22}+a_{33})+8u_2u_3a_{23}$$

$$\left<\vec{u},\eta_5\right>^2\left<A\eta_5,\eta_5\right>+\left<\vec{u},\eta_{11}\right>^2\left<A\eta_{11},\eta_{11}\right>=2u_2^2(a_{22}+a_{44})+2u_4^2(a_{22}+a_{44})+8u_2u_4a_{24}$$

$$\left<\vec{u},\eta_6\right>^2\left<A\eta_6,\eta_6\right>+\left<\vec{u},\eta_{12}\right>^2\left<A\eta_{12},\eta_{12}\right>=2u_3^2(a_{33}+a_{44})+2u_4^2(a_{33}+a_{44})+8u_3u_4a_{34}$$
Now we have
\begin{flalign*}
    \sum_{j=1}^{12}\left<\vec{u},\eta_j\right>^2\left<A\eta_j,\eta_j\right>&=4\left(\sum_{j=1}^4 u_j^2 a_{jj}+2\sum_{i,j=1,i\neq j}^4 u_i u_j a_{ij}\right)&\\&+2u_1^2(a_{11}+a_{22}+a_{33}+a_{44})&\\&+2u_2^2(a_{11}+a_{22}+a_{33}+a_{44})+2u_3^2(a_{11}+a_{22}+a_{33}+a_{44})&\\&+2u_4^2(a_{11}+a_{22}+a_{33}+a_{22})&\\&=4\left<A\vec{u},\vec{u}\right>+2(u_1^2+u_2^2+u_3^2+u_4^2)(a_{11}+a_{22}+a_{33}+a_{44})&\\&=4\left<A\vec{u},\vec{u}\right>+2\text{ tr}(A)&\\
\end{flalign*}

Thus
\begin{align*}
    d&=\frac{\sum_{j=1}^{12}\left<\vec{u},\eta_j\right>^2\left<A\eta_j,\eta_j\right>}{\sum_{j=1}^{12} \left<\vec{u},\eta_j\right>^2}&\\\\&=\frac{\left(4\left<A\vec{u},\vec{u}\right>+2\text{ tr}(A)\right)}{6}&\\\\&=\frac{8}{6}\left(\frac{2}{4}\left<A\vec{u},\vec{u}\right>+\frac{1}{4}\text{ tr}(A)\right)&\\
\end{align*}
\end{proof}

We remark that by factoring out a 2, because of the square of the norm of the vectors $\eta$ and then multiplying by $\frac{1}{2}$, one would arrive at $c_{4,4} = \frac{2}{6}$, in agreement with equation (3.5).

\vspace{0.2in} 

There are several ways to construct the 24-cell honeycomb. The most algebraic is to take the Hurwitz quaternions with even square norm as center.  The interiors of the 24-cells are the locus of points closest to one of these centers, that is one constructs a Voronoi tesellation. Extremal points of these polyhedron are the vertices of the 24-cells which are located at the Hurwitz quaternions with odd square norm.  

\vspace{0.1in}
\noindent
For example $(0,0,0,0)$ would be a center of a 24-cell with 8 of its vertices given by the permutations of $(\pm 1,0,0,0)$ and the other 16 vertices given by $(\pm \frac{1}{2}, \pm \frac{1}{2}, \pm \frac{1}{2}, \pm \frac{1}{2}). $  Some centers which have distance squared 2 away, such as $(1,1,0,0)$ would share 6 vertices while other centers such as $(2,0,0,0)$ would share only 1 vertex, in this case $(1,0,0,0)$.

\vspace{0.2in}
In the final two examples, we examined the other regular polytopes in 4 dimensions which were also considered in \cite{Japanese}.  These figures are beyond the ability to keep track of by hand and MATLAB code was used to examine asymptotic mean value properties.   In both cases, the AMVP was found to hold for p=4.  The first polytope under consideration is the $600$-cell. It is a convex regular polytope in 4 dimensions which is analogous to a Platonic solid. Its boundary is composed of 600 tetrahedral cells with 20 meeting at each vertex. There are 120 vertices which we now describe.

\begin{example}\label{example3}

The vertex set for the $600$-cell is  $(\pm 1, \pm 1, \pm 1, \pm 1),  (\pm 2,  0, 0, 0)$ and its permutations, and the even permutation for $(\pm c,\pm 1, \pm 1/c, 0)$ where
$c = \frac{1 + \sqrt{5}}{2}$ is a $4$-\textbf{averaging} set.
\end{example}

We have
$\sum_{j=1}^{120} \left<\vec{u},\eta_j\right>^2=120(u_1^2+u_2^2+u_3^2+u_4^2)$ and\\

$\sum_{j=1}^{120}\left<\vec{u},\eta_j\right>^2\left<A\eta_j,\eta_j\right>=160\left<A\vec{u},\vec{u}\right>+80\text{ tr}(A)(u_1^2+u_2^2+u_3^2+u_4^2)$

\vspace{0.1in}
Thus $J$ is a $4-$\textbf{averaging domain}
\begin{align*}
& \frac{\sum_{j=1}^{120}\left<\vec{u},\eta_j\right>^2\left<A\eta_j,\eta_j\right>}{\sum_{j=1}^{120} \left<\vec{u},\eta_j\right>^2}&\\\\&=\frac{\left(160\left<A\vec{u},\vec{u}\right>+80\text{ tr}(A)\right)}{120}&\\\\&=  \frac{80}{120} \left(2\left<A\vec{u},\vec{u}\right>+\text{ tr}(A)\right)&\\\\&=  \frac{8}{3} \left(\frac{2}{4}\left<A\vec{u},\vec{u}\right>+ \frac{1}{4}\text{ tr}(A)\right)&\\
\end{align*}

The final polytope we look at is the $120$-cell. It is a regular convex polytope whose boundary is composed of 120 dodecahedral cells with 4 meeting at each vertex. It has 600 vertices in its vertex set $J$. 

\begin{example}\label{example4}
The vertex set for the $120$-cell is the set of all permutations for\\ $(\pm 2, \pm 2, 0, 0), (\pm \sqrt{5},  \pm 1, \pm 1, \pm 1)$,  $(\pm c,\pm c, \pm c, \pm c^{-2}), \text{ and } (\pm c^2,\pm c^{-1}, \pm c^{-1}, \pm c^{-1})$,\\ and the even permutations for $(\pm c^2,\pm c^{-2}, \pm 1, 0), (\pm \sqrt{5},\pm c^{-1}, \pm c, \pm 0 ),\\ (\pm 2,\pm 1, \pm c, \pm c^{-1} )$ where $c = \frac{1 + \sqrt{5}}{2}$.  It is a $4$-\textbf{averaging set}.
\end{example}

\vspace{0.1in}
\noindent
Computations show that 
$\sum_{j=1}^{600} \left<\vec{u},\eta_j\right>^2=1200(u_1^2+u_2^2+u_3^2+u_4^2)$ and\\

$\sum_{j=1}^{600}\left<\vec{u},\eta_j\right>^2\left<A\eta_j,\eta_j\right>=3200\left<A \vec{u},\vec{u}\right>+1600\text{ tr}(A)(u_1^2+u_2^2+u_3^2+u_4^2)$

\vspace{0.1in}
And the usual ratio shows that $J$ is a $4-$ \textbf{averaging} set. 
\begin{align*}
    & \frac{\sum_{j=1}^{600}\left<\vec{u},\eta_j\right>^2\left<A\eta_j,\eta_j\right>}{\sum_{j=1}^{600} \left<\vec{u},\eta_j\right>^2}&\\\\&=\frac{\left(3200\left<A \vec{u},\vec{u}\right>+1600\text{ tr}(A)\right)}{1200}&\\\\&= \frac{1600}{1200} \left(2\left<A \vec{u},\vec{u}\right>+\text{ tr}(A)\right)&\\\\&= \frac{16}{3} \left(\frac{2}{4}\left<A \vec{u},\vec{u}\right>+ \frac{1}{4}\text{ tr}(A)\right).&
\end{align*}

\vspace{0.5in}

\section{Discussion}

We have shown the first set of fully discrete vectors leading to an asymptotic mean value property for the p-Laplacian, $p \neq 2$. There are a large number of questions which this result raises.  The first is to generalize to weighted version. If one defines weighted p-averages on a discrete set of vertices, via definition 2.2 then the AMVP for this average leads one to the following definition

\begin{definition} 
		We say that a discrete set of vectors $J = \{\eta_1, \eta_2, \cdots \eta_k \} \subset \mathbb{R}^n$ is a \textbf{ weighted p-averaging set } if there is a $d_{p,n}$ and a set of weights $\{ \omega_1, \omega_2, \cdots \omega_k \}$ so that for every symmetric matrix $A$, and unit vector $\vec{u}$  we have
		\begin{equation}
		d_{p,n}(\frac{1}{p} tr(A) + \frac{p-2}{p}\left<A \vec{u},\vec{u}\right>)=\frac{\sum_{j \in J} \left<\vec{u},\eta_j\right>^{p-2}\left<A\eta_j,\eta_j\right> \omega_j}{\sum_{j \in J}  \left<\vec{u},\eta_j\right>^{p-2} \omega_j} 
		\end{equation}
	\end{definition}

\vspace{0.2in}
\begin{example}\label{example5}
In 2 dimensions, the vertex set  $(\pm \frac{1}{2}, \pm \frac{1}{2},), (\pm 1,  0)$,  $(0,\pm 1), $ with weights of $4$ for the $(\pm \frac{1}{2}, \pm \frac{1}{2},)$ vertices and $1$ otherwise is a \textbf{weighted 4-averaging set}.
\end{example}

\begin{align*}
  \frac{\sum_{j \in J} \left<\vec{u},\eta_j\right>^{p-2}\left<A\eta_j,\eta_j\right> \omega_j}{\sum_{j \in J}  \left<\vec{u},\eta_j\right>^{p-2} \omega_j}&\\&= \frac{a_{11}+a_{22}+2u_{1}^2a_{11}+2u_{2}^2a_{22}+4u_{1}u_{2}a_{12}}{6}&\\&=\frac{tr(A) + 2\left<A \vec{u},\vec{u}\right>)}{6}&\\&=\frac{4}{6}(\frac{1}{4}tr(A) + \frac{2}{4}\left<A \vec{u},\vec{u}\right>))  
\end{align*}	

\vspace{0.2in}
\begin{example}\label{example6}
In 3 dimensions, the vertex set\\  $(\pm \frac{1}{2}, \pm \frac{1}{2},\pm \frac{1}{2}), (\pm 1,  0, 0)$,  $(0,\pm 1, 0), (0, 0, \pm 1)$ with weights of $16/2^n=16/2^3$ for the $(\pm \frac{1}{2}, \pm \frac{1}{2},\pm \frac{1}{2})$ vertices and $1$ otherwise is a \textbf{weighted 4-averaging set}.
\end{example}

  $$\frac{\sum_{j \in J} \left<\vec{u},\eta_j\right>^{p-2}\left<A\eta_j,\eta_j\right> \omega_j}{\sum_{j \in J}  \left<\vec{u},\eta_j\right>^{p-2} \omega_j}$$
  $$=\frac{a_{11}+a_{22}+a_{33}+2u_{1}^2a_{11}+2u_{2}^2a_{22}+2u_{3}^2a_{33}+4u_{1}u_{2}a_{12}+4u_{1}u_{3}a_{13}+4u_{2}u_{3}a_{23}}{6}$$

  $$=\frac{tr(A) + 2\left<A \vec{u},\vec{u}\right>)}{6}=\frac{4}{6}(\frac{1}{4}tr(A) + \frac{2}{4}\left<A \vec{u},\vec{u}\right>))$$

\vspace{0.2in}
\begin{example}\label{example7}
In 4 dimensions, the vertex set\\  $(\pm \frac{1}{2}, \pm \frac{1}{2},\pm \frac{1}{2}, \pm \frac{1}{2}), (\pm 1,  0, 0, 0)$,  $(0,\pm 1, 0, 0), (0, 0, \pm 1, 0), (0, 0, 0, \pm 1)$ with weights of $16/2^n=16/2^4$ for the $(\pm \frac{1}{2}, \pm \frac{1}{2},\pm \frac{1}{2}, \pm \frac{1}{2})$ vertices and $1$ otherwise is a \textbf{weighted 4-averaging set}.
\end{example}

  $$\frac{\sum_{j \in J} \left<\vec{u},\eta_j\right>^{p-2}\left<A\eta_j,\eta_j\right> \omega_j}{\sum_{j \in J}  \left<\vec{u},\eta_j\right>^{p-2} \omega_j}$$
  $$=\frac{a_{11}+a_{22}+a_{33}+a_{44}}{6}$$
  $$+\frac{ 2(u_{1}^2a_{11}+u_{2}^2a_{22}+u_{3}^2a_{33}+u_{4}^2a_{44})}{6}$$
   $$+\frac{4(u_{1}u_{2}a_{12}+u_{1}u_{3}a_{13}+u_{1}u_{4}a_{14}+u_{2}u_{3}a_{23}+u_{2}u_{4}a_{24}+u_{3}u_{4}a_{34})}{6}$$

  $$=\frac{tr(A) + 2\left<A \vec{u},\vec{u}\right>)}{6}=\frac{4}{6}(\frac{1}{4}tr(A) + \frac{2}{4}\left<A \vec{u},\vec{u}\right>))$$

\vspace{0.2in}
\begin{example}\label{example9}
In 5 dimensions, the vertex set\\  $(\pm \frac{1}{2}, \pm \frac{1}{2},\pm \frac{1}{2}, \pm \frac{1}{2}, \pm \frac{1}{2}), (\pm 1,  0, 0, 0, 0)$, $(0,\pm 1, 0, 0, 0),(0, 0, \pm 1, 0, 0),$\\$ (0, 0, 0, \pm 1,0), (0, 0, 0, 0, \pm 1)$\\ with weights of $16/2^n=16/2^5$ for the vertices $(\pm \frac{1}{2}, \pm \frac{1}{2},\pm \frac{1}{2}, \pm \frac{1}{2},\pm \frac{1}{2})$ vertices and $1$ otherwise is a \textbf{weighted 4-averaging set}.
\end{example}

  $$\frac{\sum_{j \in J} \left<\vec{u},\eta_j\right>^{p-2}\left<A\eta_j,\eta_j\right> \omega_j}{\sum_{j \in J}  \left<\vec{u},\eta_j\right>^{p-2} \omega_j}$$
  $$=\frac{a_{11}+a_{22}+a_{33}+a_{44}+a_{55}}{6}$$
  $$+\frac{ 2(u_{1}^2a_{11}+u_{2}^2a_{22}+u_{3}^2a_{33}+u_{4}^2a_{44}+u_{5}^2a_{55})}{6}$$
   $$+\frac{4(u_{1}u_{2}a_{12}+u_{1}u_{3}a_{13}+u_{1}u_{4}a_{14}+u_{1}u_{5}a_{15})}{6}$$

  $$+\frac{4(u_{2}u_{3}a_{23}+u_{2}u_{4}a_{24}+u_{2}u_{5}a_{25})}{6}$$

$$+\frac{4(u_{3}u_{4}a_{34}+u_{3}u_{5}a_{35}+u_{4}u_{5}a_{45})}{6}$$

  $$=\frac{tr(A) + 2\left<A \vec{u},\vec{u}\right>)}{6}=\frac{4}{6}(\frac{1}{4}tr(A) + \frac{2}{4}\left<A \vec{u},\vec{u}\right>))$$

\vspace{0.2in}
\begin{example}\label{example10}
In 6 dimensions, the vertex set\\  $(\pm \frac{1}{2}, \pm \frac{1}{2},\pm \frac{1}{2}, \pm \frac{1}{2}, \pm \frac{1}{2}, \pm \frac{1}{2}), (\pm 1,  0, 0, 0, 0, 0)$, $(0,\pm 1, 0, 0, 0, 0),(0, 0, \pm 1, 0, 0, 0),$\\$ (0, 0, 0, \pm 1,0, 0), (0, 0, 0, 0, \pm 1, 0), (0, 0, 0, 0, 0,\pm 1)$\\ with weights of $16/2^n=16/2^6$ for the vertices $(\pm \frac{1}{2}, \pm \frac{1}{2},\pm \frac{1}{2}, \pm \frac{1}{2},\pm \frac{1}{2}, \pm \frac{1}{2})$ vertices and $1$ otherwise is a \textbf{weighted 4-averaging set}.
\end{example}

$$\frac{\sum_{j \in J} \left<\vec{u},\eta_j\right>^{p-2}\left<A\eta_j,\eta_j\right> \omega_j}{\sum_{j \in J}  \left<\vec{u},\eta_j\right>^{p-2} \omega_j}$$
  $$=\frac{a_{11}+a_{22}+a_{33}+a_{44}+a_{55}+a_{66}}{6}$$
  $$+\frac{ 2(u_{1}^2a_{11}+u_{2}^2a_{22}+u_{3}^2a_{33}+u_{4}^2a_{44}+u_{5}^2a_{55}+u_{6}^2a_{66})}{6}$$
   $$+\frac{4(u_{1}u_{2}a_{12}+u_{1}u_{3}a_{13}+u_{1}u_{4}a_{14}+u_{1}u_{5}a_{15}+u_{1}u_{6}a_{16})}{6}$$

  $$+\frac{4(u_{2}u_{3}a_{23}+u_{2}u_{4}a_{24}+u_{2}u_{5}a_{25}+u_{2}u_{6}a_{26})}{6}$$

$$+\frac{4(u_{3}u_{4}a_{34}+u_{3}u_{5}a_{35}+u_{3}u_{6}a_{36}+u_{4}u_{5}a_{45}+u_{4}u_{6}a_{46}+u_{5}u_{6}a_{56})}{6}$$

  $$=\frac{tr(A) + 2\left<A \vec{u},\vec{u}\right>)}{6}=\frac{4}{6}(\frac{1}{4}tr(A) + \frac{2}{4}\left<A \vec{u},\vec{u}\right>))$$

\vspace{4in}
\begin{example}\label{example8}
In 2 dimensions, the vertex set  $(\pm \frac{1}{\sqrt{2}}, \pm \frac{1}{\sqrt{2}},), (\pm 1,  0)$,  $(0,\pm 1)$. Let $p=6$
\end{example}

\begin{align*}
  \frac{\sum_{j \in J} \left<\vec{u},\eta_j\right>^{p-2}\left<A\eta_j,\eta_j\right> \omega_j}{\sum_{j \in J}  \left<\vec{u},\eta_j\right>^{p-2} \omega_j}&\\&= \frac{2.5a_{11}u_{1}^4 + 0.5a_{11}u_{2}^4+0.5a_{22}u_{1}^4 + 2.5a_{22}u_{2}^4+4a_{12}u_{1}u_{2}^3 + 4a_{12}u_{1}^3u_{2}+3a_{11}u_{1}^2u_{2}^2+ 3a_{22}u_{1}^2u_{2}^2}{3}&\\&=\frac{0.5tr(A) + 2\left<A \vec{u},\vec{u}\right>)}{3}&\\&=\frac{tr(A) + 4\left<A \vec{u},\vec{u}\right>)}{6}&\\&
\end{align*}	
\vspace{0.2in}
 
Another question for future investigation is if there is a fully discrete set for parabolic equations similar to the continuous versions in \cite{Ishiwata}.    We recall for parabolic problems one needs the definition of the heat ball

$$
E(x, t ; \varepsilon)=\left\{(y, s) \in \mathbf{R}^{n+1} \mid s < t, \frac{1}{(4 \pi(t-s))^{n / 2}} \exp \left(-\frac{|x-y|^2}{4(t-s)}\right) \geq \frac{1}{{\varepsilon}^n}\right\}
$$
  Equipping the heat ball with the space-time measure

$$
d v(y, s)=|x-y|^2d y \frac{1}{(t-s)^2} d s
$$
and following definition 2.2 one arrives at the notion of p-average of u over the heat ball, written $A_\varepsilon^p (u)(x, t)$.
In \cite{Ishiwata} the authors expanded a parabolic function into its Taylor  showed the equivalence under general conditions over a parabolic region $\Omega_T$ of

$$
u_t=\frac{np}{n+p-2} \Delta_p^G u \text { in  } \Omega_T \text { in the viscosity sense and }
$$
$$
u(x, t)=A_\varepsilon^p (u)(x, t)+o\left(\varepsilon^2\right) \text { as } \varepsilon \rightarrow 0 \text { in the viscosity sense. }
$$

The need to use the heat ball for averaging operators may not be needed as \cite{finalparabolicgames} shows for versions of the tug-of-war averaging operators.   In that article the authors show that for some averaging operators the heat ball can be replaced by $\varepsilon$-cylinders. Further questions involve whether discrete averaging operators exist in the Heisenberg group as well as how one can use these results to implement numerical schemes to solve applied problems in image analysis as was done in by Does in \cite{kerstindoes} as well as Falcone et al. \cite{SmitsAveraging}.

\newpage
\bibliographystyle{siamplain}
\bibliography{references}

\newpage
\section{Appendix- Trigonometric Identities}

\vspace{0.2in}

From $\cos \theta=\frac{e^{-i \theta}+e^{i \theta}}{2}$, together with the binomial theorem we see that for even numbers 

$$
\cos ^{2r} \theta=\frac{1}{2^{2r}}\left(\begin{array}{l}
2r \\
r
\end{array}\right)+\frac{2}{2^{2r}} \sum_{l=0}^{r-1}\left(\begin{array}{l}
2r \\
l
\end{array}\right) \cos ((2r-2 l) \theta).
$$

Also, for the sum of arguments of cosine in arithmetic progression it is easy to show

\begin{equation}
\sum_{k=0}^{n-1} \cos (\alpha+k d)=\frac{\sin (n d / 2)}{\sin (d / 2)} \cos \left(\alpha+\frac{(n-1) d}{2}\right) ,
\end{equation}

in particular if $nd = 2\pi m$ for some integer $m$ and $0 < \frac{d}{2} < \pi$ the sum is zero independent of $\alpha$.

\vspace{0.2in}
\begin{lemma} 
If $1 \leq r \leq k $ for $k$ fixed and $a$ any number 
$$
\sum_{j=0}^{2 k+1} \cos ^{2 r}\left(a+j \cdot \frac{2 \pi}{2 k+2}\right) = (2 k+2) \cdot \frac{1}{2^{2 r}} \cdot\left(\begin{array}{c}
2 r \\
r
\end{array}\right)
$$

\end{lemma}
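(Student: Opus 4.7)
The plan is to use the power-reduction identity for $\cos^{2r}\theta$ stated in the appendix together with the arithmetic-progression summation formula, and verify that all oscillatory terms vanish for the range $1 \le r \le k$.

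First I would substitute $\theta = a + j\cdot \frac{2\pi}{2k+2}$ into the displayed identity
$$\cos^{2r}\theta = \frac{1}{2^{2r}}\binom{2r}{r} + \frac{2}{2^{2r}}\sum_{l=0}^{r-1}\binom{2r}{l}\cos\bigl((2r-2l)\theta\bigr),$$
sum over $j = 0, 1, \ldots, 2k+1$, and interchange the order of summation. The constant term $\frac{1}{2^{2r}}\binom{2r}{r}$ immediately contributes $(2k+2)\cdot\frac{1}{2^{2r}}\binom{2r}{r}$, which is the right-hand side of the lemma. So the entire content of the lemma reduces to showing that for each fixed $l \in \{0,1,\ldots,r-1\}$,
$$\sum_{j=0}^{2k+1}\cos\!\left((2r-2l)a + j\cdot (2r-2l)\cdot\frac{2\pi}{2k+2}\right) = 0.$$

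Next I would apply the arithmetic-progression identity (4.2) with $n = 2k+2$, $\alpha = (2r-2l)a$, and common difference $d = (2r-2l)\cdot\frac{2\pi}{2k+2} = \frac{2(r-l)\pi}{k+1}$. Since $nd = 2\pi(2r-2l)$ is an integer multiple of $2\pi$, the numerator $\sin(nd/2) = \sin((2r-2l)\pi)$ vanishes. To conclude the sum is zero, I need the denominator $\sin(d/2)$ to be nonzero, which is exactly the condition $0 < d/2 < \pi$ highlighted in the appendix.

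The only step requiring care — and the main (mild) obstacle — is this range condition on $d/2$. Here $d/2 = (r-l)\pi/(k+1)$, and as $l$ ranges over $0,\ldots,r-1$, the integer $r-l$ ranges over $1,\ldots,r$. Strict positivity is obvious, and the upper bound uses precisely the hypothesis $r \leq k$: indeed $r - l \leq r \leq k < k+1$, so $d/2 \leq k\pi/(k+1) < \pi$. Thus each oscillatory term is zero, the constant term survives, and the identity follows. (The hypothesis $r \le k$ is sharp: if $r = k+1$ were allowed then the term with $l=0$ would make the denominator vanish and the sum would not vanish, which matches the fact that $\cos^{2(k+1)}$ genuinely couples with frequencies commensurate to the sampling.)
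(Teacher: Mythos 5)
Your proof is correct and follows essentially the same route as the paper's appendix argument: expand $\cos^{2r}$ via the power-reduction identity, keep the constant term, and kill each oscillatory term with the arithmetic-progression formula, using $1 \le r-l \le r \le k$ to ensure $0 < d/2 < \pi$ so the denominator $\sin(d/2)$ is nonzero while the numerator $\sin(nd/2)$ vanishes. Your added remark on the sharpness of the hypothesis $r \le k$ is a nice touch but not needed.
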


\begin{proof}

\begin{align*}
& \sum_{j=0}^{2 k+1} \cos ^{2 r}\left(a+j \cdot \frac{2 \pi}{2 k+2}\right)=\sum_{j=0}^{2 k+1} \cos ^{2 r}\left( a+ j \cdot \frac{\pi}{k+1}\right)&\\\\&=\sum_{j=0}^{2 k+1} \left\{\frac{1}{2^{2 r}}\left(\begin{array}{c}
2 r \\
r
\end{array}\right)+\frac{2}{2^{2 r}} \sum_{l=0}^{r-1}\left(\begin{array}{c}
2 r \\
l
\end{array}\right) \cos \left((2 r-2 l)\left(a+j \cdot \frac{\pi}{k+1}\right)\right)\right.&\\\\&=\sum_{j=0}^{2 k+1} \frac{1}{2^{2 r}}  \left(\begin{array}{c}
2 r \\
r
\end{array}\right)+\sum_{j=0}^{2 k+1} \frac{2}{2^{2 r}} \sum_{l=0}^{r-1}\left(\begin{array}{c}
2 r \\
l
\end{array}\right) \cos \left(a \cdot (2r-2l) +j \cdot \frac{(r -l ) \cdot 2 \pi}{k+1}\right)&\\\\&=  \sum_{j=0}^{2 k+1} \frac{1}{2^{2 r}}  \left(\begin{array}{c}
2 r \\
r
\end{array}\right)+\sum_{l=0}^{r-1} \frac{2}{2^{2 r}}\left(\begin{array}{c}
2 r \\
l
\end{array}\right) \sum_{j=0}^{2k+1} \cos \left(a \cdot (2r-2l) +j \cdot \frac{(r -l ) \cdot 2 \pi}{k+1}\right)&\\&=  \sum_{j=0}^{2 k+1} \frac{1}{2^{2 r}}  \left(\begin{array}{c}
2 r \\
r
\end{array}\right) =(2 k+2) \cdot \frac{1}{2^{2 r}} \cdot\left(\begin{array}{c}
2 r \\
r
\end{array}\right). &
\end{align*}

The inner sum in the next to the last line is zero using equation (5.1) with $n=2k+2, d=\frac{(r-l) 2 \pi}{k+1}$ and noting $1 \leq r-l \leq r \leq k$.

\end{proof}

\end{document}


\maketitle

\section{A detailed example}

Here we include some equations and theorem-like environments to show
how these are labeled in a supplement and can be referenced from the
main text.
Consider the following equation:
\begin{equation}
  \label{eq:suppa}
  a^2 + b^2 = c^2.
\end{equation}
You can also reference equations such as \cref{eq:matrices,eq:bb} 
from the main article in this supplement.

\lipsum[100-101]

\begin{theorem}
An example theorem.
\end{theorem}

\lipsum[102]
 
\begin{lemma}
An example lemma.
\end{lemma}

\lipsum[103-105]

Here is an example citation: \cite{KoMa14}.

\section[Proof of Thm]{Proof of \cref{thm:bigthm}}
\label{sec:proof}

\lipsum[106-112]

\section{Additional experimental results}
\Cref{tab:smfoo} shows additional
supporting evidence. 

\begin{table}[htbp]
\footnotesize
  \caption{Example table.}\label{tab:smfoo}
\begin{center}
  \begin{tabular}{|c|c|c|} \hline
   Species & \bf Mean & \bf Std.~Dev. \\ \hline
    1 & 3.4 & 1.2 \\
    2 & 5.4 & 0.6 \\ \hline
  \end{tabular}
\end{center}
\end{table}

\bibliographystyle{siamplain}
\bibliography{references}